\newtheorem{teo}{Theorem}
\newtheorem{lemma}{Lemma}
\newtheorem{cor}{Corollary}
\newtheorem{defi}{Definition}
\theoremstyle{definition}
\newtheorem{remark}{Remark}
\theoremstyle{definition}
\theoremstyle{definition}
\theoremstyle{remark}
\DeclareMathOperator{\re}{\mathbb{R}}
\DeclareMathOperator{\er}{\mathbf{E}}
\DeclareMathOperator{\pr}{\mathbf{P}}
\DeclareMathOperator{\p}{\mbox{\rm I\hspace{-0.02in}P}}
\DeclareMathOperator{\e}{\mbox{\rm I\hspace{-0.02in}E}}
\newcommand{\R}{\mbox{\rm I\hspace{-0.02in}R}}
\DeclareMathOperator{\alphab}{\boldsymbol\alpha}
\title{Entrance laws for positive self-similar Markov processes}
\author{V{\'\i }ctor Rivero\thanks{Centro de
Investigaci\'on en Matem\'aticas (CIMAT A.C.) E-mail:
rivero@cimat.mx}}
\date{}
\begin{document}
\maketitle
\begin{abstract}
In this paper we propose an alternative construction of the self-similar entrance laws for positive self-similar Markov processes. The study of entrance laws has been carried out in previous papers using different techniques,  depending on whether the process hits zero in a finite time almost surely or not. The technique here used allows to obtain the entrance laws in a unified way. Besides, we show that in the case where the process hits zero in a finite time, if there exists a self-similar entrance law, then there are infinitely many, but they can all be embedded into a single one. We propose a pathwise extension of this embedding for self-similar Markov processes. We apply the same technique to construct entrance law for other types self-similar processes.    
\end{abstract}

\noindent {\bf Keywords}: Self-similar Markov processes, L\'evy processes, entrance laws, recurrent extensions. 
\noindent {\bf MSC}: 60G18, 60.62,  60G51.

\section{Introduction and main result}
Let $\p=(\p_x, x\geq 0)$ be a family of probability measures on
Skorohod's space $\mathbb{D}^+,$ the space of c\`adl\`ag paths
defined on $[0,\infty[$ with values in $\R^+$. The space
$\mathbb{D}^+$ is endowed with the Skohorod topology and $\mathcal{D}$ is its Borel
$\sigma$-field. We will denote by $X$ the canonical process of the
coordinates and $(\mathcal{G}_t, t\geq 0)$ will be the completed natural
filtration generated by $X$. Assume that under $\p$ the canonical
process $X$ is a positive self-similar Markov process (pssMp), that
is to say that $(X,\p)$ is a $[0,\infty[$-valued strong Markov
process and that it has the scaling property: there exists an
$\alpha>0$ such that for every $c>0,$
\begin{equation*}
\left(\{cX_{tc^{-\alpha}},t\geq 0\}, \p_x\right)
\stackrel{\text{Law}}{=} \left(\{X_{t},t\geq 0\},
\p_{cx}\right)\qquad \forall x\geq 0.
\end{equation*}
In this case we will say that $X$ is an $1/\alpha$-positive self-similar Markov process ($\alpha$-pssMp). We will assume furthermore that $(X,\p)$ is a pssMp for which $0$ is a cemetery state. The hitting time of zero will be denoted by $T_0=\inf\{t>0: X_{t}=0\}$. So, the law $\p_0$ will be understood as the law of the degenerated path equal to $0.$

The importance of self-similar Markov processes resides in the fact, shown by Lamperti \cite{lamperti-62}, that it is the totality of Markov processes that can arise as scaling limits of stochastic processes. Said otherwise, this the class of possible limit Markov processes that can occur upon subjecting a fixed stochastic process to infinite contractions of its space and time scales. Further information about this class of processes and its applications can be found in the review paper~\cite{pardo+rivero}  and  chapter~13 in \cite{kyprianoubook}.  One particular feature of pssMp is that they are in bijection with real-valued L\'evy processes. This useful bijection, that we will next explain, is given through the so-called Lamperti's transformation, in honour to the celebrated work of Lamperti~\cite{Lamperti}.

A $\re\cup\{-\infty\}$-valued L\'evy process is an stochastic process whose paths are c\`adl\`ag, the state $\{-\infty\}$ is an absorbing point, and it has stationary and independent increments. The state $\{-\infty\}$ is understood as an isolated point and hence the process hits this state and dies at an independent exponential time $\zeta,$ with some parameter $q\geq 0,$ the case $q=0$ is included to allow this time to be infinite a.s. The law of $\xi$ is characterized completely by its L\'evy-Khintchine exponent $\Psi,$ which takes the following form
\begin{equation}\label{Levy-K}
\log \er\left[e^{z \xi_{1}}, 1<\zeta\right]=\Psi(z)=-q+bz +\frac{\sigma^{2}}{2}z^{2}+\int_{-\infty}^{\infty}\left(e^{zy} - 1 - zy\mathbb{I}_{\{|y|<1\}}\right)\Pi(dy), 
\end{equation}
for any $z\in i\R,$ where $\sigma, b \in \R$ and $\Pi$ is a L\'evy measure  satisfying the condition $\int_{\mathbb{R}}(y^{2}\wedge 1)\Pi(dy)<\infty$. For background about L\'evy processes see \cite{bertoinbook}, \cite{kyprianoubook}, \cite{Satobook}.

In order to state our main results we recall first a few facts about self-similar Markov processes, L\'evy processes and exponential functionals of L\'evy processes.  It is well known (see Lamperti \cite{Lamperti}) that for any $1/\alpha$-pssMp, $X=(X_{t}, t\geq 0),$ there exists a $\re\cup\{-\infty\}$ valued Lévy process $\xi$ independent of the starting point $X_0,$ such that 
\begin{equation}
\label{Lamperti}
X_t1_{\{t<T_{0}\}}=X_0\exp\left(\xi_{\tau(tX_0^{-\alpha})}\right)1_{\{\tau(tX_0^{-\alpha})<\zeta\}}, \quad t \geq 0,  
\end{equation}
where  $\tau$ is the time-change $$\qquad \tau(t)=\inf\left\{s>0: \int^s_{0}\exp(\alpha \xi_{u}) \mathrm du>t\right\},\qquad t\geq 0,$$ with the usual convention $\inf\{\emptyset\}=\infty$. 
Lamperti  proved that, for any $x>0$, $T_0$ is finite $\mathbb P_x$-a.s. if and only if either $\zeta<\infty$ a.s., or $\zeta=\infty$ a.s. and $\lim_{t \rightarrow \infty} \xi_t = - \infty$  a.s. Conversely, Lamperti showed that given a L\'evy process $\xi,$ the transformation just described gives rise to a $1/\alpha$-pssMp. We will refer to this transformation as Lamperti's transformation. Throughout this paper we will assume $\p$ is the reference measure, and under $\p$, $X$ will be a pssMp and $\xi$ the L\'evy process associated to it via Lamperti's transformation. The measures $(\p_{x},x>0)$ are the a conditional regular version of the law of $X$ given $X_{0}=x.$ Notice that under $\p_{x},$ $\xi$ starts from $\log x.$ This implies that the law of $\xi,$ under $\p_{x},$ is that of $\xi+\log x$ under $\p.$  Besides, it follows from Lamperti's transformation that under $\p_{x}$ the first hitting time of $0$ for $X,$ $T_{0}$, has the same law as $x^{\alpha}\int^{\zeta}_{0}\exp(\alpha\xi_{s})\mathrm ds,$ under $\p.$ The random variable $I,$ defined by
\begin{equation}\label{I}
I:=\int_0^{\zeta}\exp(\alpha\xi_s) \mathrm ds,
\end{equation}
is usually named \textit{exponential functional of the L\'evy process $\xi.$} Lamperti's above mentioned result implies that $I$ is a.s. finite if and only if $\zeta<\infty$ a.s. or $\zeta=\infty$ and $\lim_{t \rightarrow \infty} \xi_t = - \infty,$  a.s. 

Motivated by defining a pssMp issued from $0$, when constructed using Lamperti's transformation, there have been several papers studying the existence of what we call here  {\it self-similar entrance laws}, see for instance \cite{bertoin-caballero}, \cite{BY2002}, \cite{cc},\cite{ckpr} and \cite{pardo+rivero} where an account on this topic is provided. This is the object of main interest in this paper.  We will say that a family of sigma-finite measures on $(0,\infty)$, $\{\eta_{t}, t>0\}$, is a self-similar entrance law for the semigroup $\{P^{X}_{t}, t\geq 0\}$ of $X$ if the following are satisfied
\begin{itemize}
\item[(EL-i)] the identity between measures $$\eta_{s}P^{X}_{t}=\eta_{t+s},$$ that is $$\int_{(0,\infty)}\eta_{s}(\mathrm dx)\e_{x}\left[f(X_{t}), t<T_{0}\right]=\int_{(0,\infty)}\eta_{t+s}(\mathrm dx)f(x),$$
$\forall f:(0,\infty)\rightarrow \mathbb R$ positive measurable, 
 holds for any $s>0,$ $t\geq 0;$ 
\item[(EL-ii)]there exists an index $\gamma\geq 0$ such that for all $s>0,$ $$\eta_{s}f=s^{-\gamma/\alpha}\eta_{1}H_{s^{1/\alpha}}f,$$ where $f$ denotes any positive and measurable function, and for $c>0,$ $H_{c}$ denotes the dilation operator $H_{c}f(x)=f(cx).$
\end{itemize} In that case, we say that $\{\eta_{s}, s>0\},$ is a $\gamma$-self-similar entrance law, $\gamma$-ssel for short, associated to $X.$ Observe that the condition (EL-ii) is equivalent to the apparently more general condition: there is a $\gamma\geq 0$ such that for any $c>0,$ $s>0,$ $$\eta_{s}f=c^{-\gamma}\eta_{sc^{-\alpha}}H_{c}f$$ for any positive and measurable function $f.$

In this paper our main concern is to describe the family of $\sigma$-finite ssel for a pssMp that either hits zero in a finite time or never hits zero and the underlying Lévy process in Lamperti's transformation drifts towards $+\infty$.  

In several instances we will assume that there exists a $\theta\geq 0$ such that
$$\displaystyle \e(\exp\{\gamma\xi_{1}\}, 1<\zeta)\leq 1.$$ Which is equivalent to ask that
$$\displaystyle \e(\exp\{\gamma\xi_{t}\}, t<\zeta)\leq 1,\qquad \forall t\geq 0.$$ Under this condition we will denote by $\p^{(\theta)}$ the unique probability measure on $\mathbb{D}$ such that 
$$\p^{(\theta)}=e^{\theta\xi_{t}}\p\quad \text{on } \mathcal{F}_{t},\ \text{for all } t\geq 0.$$ For  $\theta=0,$ we will write $\p$ instead of $\p^{(0)}$. As usual, we will denote by $\widehat{\p}^{(\theta)}$ the law of the dual L\'evy process $\widehat{\xi}=-\xi$ under $\p^{(\theta)}.$ 

We have the following theorem whose proof was partially inspired by Fitzsimmons' \cite{fitzsimmons2006} constructions of excursions measures for pssMp. 
\begin{teo}\label{thm:1}
Let $\left((X_{t})_{t\geq 0},(\p_{x})_{x>0}\right)$ be a $1/\alpha$-positive self-similar Markov process  and $\xi$ the L\'evy process associated to it via Lamperti's transformation. Assume that $X$ hits zero in a finite time a.s. For $\gamma>0$ fixed, the following are equivalent
\begin{itemize}
\item[(i)] $\displaystyle \e(\exp\{\gamma\xi_{1}\},1<\zeta)\leq 1;$ 
\item[(ii)] the family of measures $(\mu^{\gamma}_{s}, s>0),$ defined by $$\mu^{\gamma}_{s}f:=s^{-\gamma/\alpha}\widehat{\e}^{(\gamma)}\left(f\left(\left(\frac{s}{I}\right)^{1/\alpha}\right)I^{\frac{\gamma}{\alpha}-1}\right),$$ for $f:\re^{+}\to\re^{+}$ measurable, forms a $\gamma$-ssel for $\left((X_{t})_{t\geq 0},(\p_{x})_{x>0}\right),$ and $\mu^{\theta}_{1}1<\infty;$  
\item[(iii)] there exists a $\gamma$-ssel $(\eta^{\gamma}_{t}, t>0)$ for $\left((X_{t})_{t\geq 0},(\p_{x})_{x>0}\right),$ such that $\eta^{\gamma}_{1}$ is a probability measure.
\end{itemize}In this case, the measures in (ii) form the unique, up to multiplicative constants, finite $\gamma$-ssel for $X.$

Furthermore,  when one of the above conditions is satisfied, it is then satisfied for every $0<\beta<\gamma.$ For any pair $(\beta^{\prime}, \beta),$ such that $0<\beta^{\prime}<\beta\leq \gamma,$ there exists a constant $0<C_{\beta^{\prime},\beta}<\infty$ such that the associated ssel are related by means of the identity
\begin{equation}\label{entrancelaws}
\mu^{\beta^{\prime}}_{s}(dx)=C_{\beta^{\prime},\beta}s^{(\beta-\beta^{\prime})/\alpha}\int_{z\in(0,1)}\p\left(\mathcal{B}_{\frac{\beta^{\prime}}{\alpha},\frac{(\beta-\beta^{\prime})}{\alpha}}\in dz\right)\mu^{\beta}_{s}(z^{1/\alpha}dx), \qquad x>0,\qquad s>0,
\end{equation}
with
$$\p\left(\mathcal{B}_{\frac{\beta^{\prime}}{\alpha},\frac{(\beta-\beta^{\prime})}{\alpha}}\in dy\right)=\frac{\Gamma\left(\frac{\beta}{\alpha}\right)}{\Gamma\left(\frac{\beta-\beta^{\prime}}{\alpha}\right)\Gamma\left(\frac{\beta^{\prime}}{\alpha}\right)}y^{\frac{\beta^{\prime}}{\alpha}-1}(1-y)^{\frac{(\beta-\beta^{\prime})}{\alpha}-1}1_{\{0<y<1\}}dy.$$
\end{teo}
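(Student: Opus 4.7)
The plan is to work through the Lamperti bijection and the Esscher change of measure $\p^{(\gamma)}$ together with its dual $\widehat{\p}^{(\gamma)}$, computing expectations of functionals of the exponential functional $I$, and using a Fitzsimmons-type time-reversal to carry out the Chapman--Kolmogorov step. The self-similarity property (EL-ii) for $(\mu^\gamma_s)$ is built directly into the definition,
\begin{equation*}
\mu^\gamma_s f \;=\; s^{-\gamma/\alpha}\widehat{\e}^{(\gamma)}\left(f\bigl(s^{1/\alpha} I^{-1/\alpha}\bigr) I^{\gamma/\alpha - 1}\right) \;=\; s^{-\gamma/\alpha}\mu^\gamma_1 H_{s^{1/\alpha}}f,
\end{equation*}
so I would dispose of this first and focus on (EL-i), the integrability $\mu^\gamma_1(1) < \infty$, and the converse (iii) $\Rightarrow$ (i).

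For (i) $\Rightarrow$ (ii), condition (i) ensures that $\p^{(\gamma)}$ is a well-defined (sub)probability on path space, and under the dual $\widehat{\p}^{(\gamma)}$ either $\widehat{\xi}$ is killed in finite time or drifts to $-\infty$, so that $I<\infty$ $\widehat{\p}^{(\gamma)}$-a.s.\ and a Carmona--Petit--Yor Mellin functional equation yields $\mu^\gamma_1(1) = \widehat{\e}^{(\gamma)}(I^{\gamma/\alpha - 1}) < \infty$. The identity $\mu^\gamma_s P^X_t = \mu^\gamma_{s+t}$ is the technical heart: plugging Lamperti's formula $\e_x(f(X_t), t<T_0) = \e(f(xe^{\xi_{\tau(t/x^\alpha)}}); \tau(t/x^\alpha) < \zeta)$ into the definition with $x = (s/I)^{1/\alpha}$ and $I$ sampled under $\widehat{\p}^{(\gamma)}$, the extra time $t/x^\alpha = tI/s$ appears as a continuation of the $\xi$-clock. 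I would then apply the time-reversal/duality identity for the pre-lifetime path of $\widehat{\xi}$ under $\widehat{\p}^{(\gamma)}$, the device used by Fitzsimmons in the cited paper: it converts the joint integration over the $\widehat{\p}^{(\gamma)}$-path and an independent continuation of $\xi$ into integration over a single augmented path whose exponential functional totals $s+t$ after rescaling. The Jacobian of this transformation exactly compensates the $I^{\gamma/\alpha-1}$ weight and collapses the expression to $\mu^\gamma_{s+t} f$. This reversal/augmentation step is the main obstacle.

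The implication (ii) $\Rightarrow$ (iii) is just normalization. For (iii) $\Rightarrow$ (i), from (EL-i) with $f\equiv 1$, (EL-ii) applied to the total mass, and Lamperti's $\p_x(T_0 > t) = \p(I > t x^{-\alpha})$, I would extract
\begin{equation*}
\int_{(0,\infty)}\p(I > r y^{-\alpha})\,\eta^\gamma_1(dy) \;=\; (1+r)^{-\gamma/\alpha},\qquad r>0.
\end{equation*}
Letting $r\to\infty$ forces $\p(I > x) = O(x^{-\gamma/\alpha})$, which by the Cram\'er-type correspondence for exponential functionals of L\'evy processes is equivalent to (i). Uniqueness of the finite $\gamma$-ssel also falls out of this equation, which determines $\eta^\gamma_1$ up to its total mass by direct inversion.

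Finally, monotonicity of (i) in $\gamma$ is convexity: $\theta\mapsto\log\e(e^{\theta\xi_1}, 1<\zeta)$ is convex, vanishes at $0$, and is $\le 0$ at $\gamma$, hence $\le 0$ on $[0,\gamma]$. The Beta-mixing identity \eqref{entrancelaws} I would prove by taking Mellin transforms in $x$ on both sides: the definitions of $\mu^{\beta'}_s$ and $\mu^\beta_s$ reduce everything to identities between $\widehat{\e}^{(\beta')}(I^{u-1})$ and $\widehat{\e}^{(\beta)}(I^{u-1})$, which differ by a Gamma-function ratio reflecting the passage from the $\beta'$- to the $\beta$-Esscher transform; the Beta--Gamma algebra identifies that ratio as the Mellin transform of the Beta density in the statement, and the constant $C_{\beta',\beta}$ is read off by matching total masses.
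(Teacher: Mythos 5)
Your plan diverges from the paper at almost every step, and in itself that is fine: the paper constructs the entrance law abstractly via the Kuznetsov measure attached to the excessive measure $e^{-\gamma x}dx$, Kaspi's time-change theorem and Mitro's time-reversal formula, proves (iii)$\Rightarrow$(i) through the Haas--Rivero bijection with quasi-stationary laws of an Ornstein--Uhlenbeck type process, and gets \eqref{entrancelaws} from the Haas--Rivero factorization $IR_{\theta}\stackrel{\text{Law}}{=}\mathcal{Z}_{\theta}$ plus Beta--Gamma algebra. Your verification of (EL-ii), the trivial implication (ii)$\Rightarrow$(iii), and the convexity argument for ``if (i) holds at $\gamma$ then at every $\beta<\gamma$'' are all correct and match the paper in spirit. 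The problem is that the two hardest points are asserted rather than proved. The entrance-law identity $\mu^{\gamma}_{s}P^{X}_{t}=\mu^{\gamma}_{s+t}$ is the technical heart of (i)$\Rightarrow$(ii); you name the right device (time reversal/duality for the path up to $T_{0}$, in the style of Fitzsimmons), but the claim that ``the Jacobian of this transformation exactly compensates the $I^{\gamma/\alpha-1}$ weight and collapses the expression to $\mu^{\gamma}_{s+t}f$'' is exactly the computation that must be carried out, and you flag it as the main obstacle without resolving it. In the paper this step is where all the work goes: Chapman--Kolmogorov comes for free from Kaspi's theorem applied to the time-changed Kuznetsov process, and the identification with $\mu^{\gamma}$ is then obtained by computing the $q$-potential of the entrance law via Mitro's formula; the finiteness $\mu^{\gamma}_{1}1<\infty$ is imported from specific lemmas of Rivero (your appeal to a Carmona--Petit--Yor equation also glosses over the case $\gamma<\alpha$, where the exponent $\gamma/\alpha-1$ is negative).

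The second genuine gap is the Beta identity \eqref{entrancelaws}. Your Mellin strategy needs the statement that $\widehat{\e}^{(\beta^{\prime})}(I^{u-1})$ and $\widehat{\e}^{(\beta)}(I^{u-1})$ ``differ by a Gamma-function ratio''; these are exponential functionals of two \emph{different} Esscher transforms, and this ratio identity is essentially equivalent to \eqref{entrancelaws} itself. What the Carmona--Petit--Yor recursion actually gives is that the two Mellin transforms satisfy the same first-order functional equation after shifting $u$ by $(\beta-\beta^{\prime})/\alpha$, so their quotient against the corresponding Gamma ratio is only known to be $1$-periodic; promoting that to a constant requires a rigidity argument (analyticity and boundedness on a vertical strip, or a probabilistic uniqueness statement), which you do not supply. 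The paper resolves precisely this issue by quoting the Haas--Rivero result that $IR_{\theta}\stackrel{\text{Law}}{=}\mathcal{Z}_{\theta}$ with $R_{\theta}$ unique in law, and then using $\mathcal{Z}_{\theta^{\prime}}\stackrel{\text{Law}}{=}\mathcal{Z}_{\theta}/\mathcal{B}_{\theta^{\prime},\theta-\theta^{\prime}}$. By contrast, your (iii)$\Rightarrow$(i) is a legitimate and arguably more elementary alternative to the paper's quasi-stationarity route, but as written it leans on a ``Cram\'er-type correspondence'' that is not an off-the-shelf equivalence: what you need, and should prove, is only that $\p(I>x)=O(x^{-\gamma/\alpha})$ forces $\e(e^{\beta\xi_{1}},1<\zeta)\leq1$ for all $\beta<\gamma$ (for instance via $\e(I^{t})\geq c_{t}\left(\e(e^{\alpha t\xi_{1}},1<\zeta)\right)^{n}$ from the Markov property at integer times), followed by Fatou as $\beta\uparrow\gamma$; likewise the uniqueness ``by direct inversion'' is salvageable, but only after observing that independence factorizes the Mellin transform of $IJ^{\alpha}\stackrel{\text{Law}}{=}\mathcal{Z}_{\gamma/\alpha}$ on the strip where it is finite.
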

In the case where $\displaystyle \e(\exp\{\gamma\xi_{1}\},1<\zeta)=1,$ the identity in (ii) has been obtained in \cite{rivero-cramer,nasc-cramer}. The second assertion in the latter Theorem implies that although there are infinitely many ssel, all can be embedded into a single one, namely that with largest self-similarity index. In Section~\ref{embed}, we will see that this can be extended to the level of stochastic processes. Besides, the second part of the Theorem has been observed  in the work \cite{haas+rivero2} using completely different techniques.  An easy extension of the results in the paper \cite{vuolle-apiala} shows that any ssel $\eta=(\eta_{t}, t>0)$ is such that either 
$\lim_{t\to 0}\eta_{t}1_{\{(a,\infty)\}}=0,$ for all $a>0,$ or $\lim_{t\to 0}\eta_{t}1_{\{(a,\infty)\}}>0,$ for all $a>0.$ And the latter holds if and only if there is a $\gamma>0$ such that 
\begin{equation}\label{jumpin}
\eta_{t}(dy)=\int_{(0,\infty)}\frac{dx}{x^{1+\gamma/\alpha}}\p_{x}(X_{t}\in dy, t<T_{0}),\qquad y>0,\ t>0;
\end{equation}
 in which case $\eta$ is a $\gamma$-ssel. The above facts and the Proposition~1 in \cite{rivero-cramer} imply the following Corollary, where a more tractable expression for the above ssel is provided. 
\begin{cor} 
Let $\left((X_{t})_{t\geq 0},(\p_{x})_{x>0}\right)$ be a $1/\alpha$-positive self-similar Markov process  and $\xi$ the L\'evy process associated to it via Lamperti's transformation. Assume that $X$ hits zero in a finite time a.s. For $\gamma>0,$ such that $\displaystyle \e(\exp\{\gamma\xi_{1}\},1<\zeta)<1,$ we have that there is a constant $c_{\gamma}\in(0,\infty)$ such that
$$c_{\gamma}\int_{(0,\infty)}\frac{dx}{x^{1+\gamma/\alpha}}\e_{x}(f(X_{s}), s<T_{0})=s^{-\gamma/\alpha}\widehat{\e}^{(\gamma)}\left(f\left(\left(\frac{s}{I}\right)^{1/\alpha}\right)I^{\frac{\gamma}{\alpha}-1}\right),$$ for any $f:(0,\infty)\to[0,\infty)$ measurable. 
\end{cor}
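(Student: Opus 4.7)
The plan is to exploit the uniqueness part of Theorem~\ref{thm:1}. Since the hypothesis $\e(\exp\{\gamma\xi_{1}\},1<\zeta)<1$ in particular implies condition (i) of Theorem~\ref{thm:1}, the family $(\mu^{\gamma}_{s})_{s>0}$ appearing on the right--hand side of the claimed identity is a $\gamma$-ssel with $\mu^{\gamma}_{1}(1)<\infty$, and by the uniqueness assertion at the end of Theorem~\ref{thm:1} it is the \emph{only} finite $\gamma$-ssel up to a positive multiplicative constant. On the other side, write $\nu_{s}(dy):=\int_{(0,\infty)}\frac{dx}{x^{1+\gamma/\alpha}}\p_{x}(X_{s}\in dy,\,s<T_{0})$ for the measures on the left. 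These are exactly the entrance laws of the form \eqref{jumpin}, so by the extension of the results of \cite{vuolle-apiala} quoted just above the Corollary, $(\nu_{s})_{s>0}$ is itself automatically a $\gamma$-ssel.

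It therefore suffices to verify that this second ssel is \emph{finite}, i.e.\ that $\nu_{1}(1)<\infty$; once this is in hand, the uniqueness statement of Theorem~\ref{thm:1} forces $\nu_{s}=c_{\gamma}\mu^{\gamma}_{s}$ for every $s>0$ and some constant $c_{\gamma}\in(0,\infty)$, which is precisely the displayed identity (and the explicit form of $\mu^{\gamma}_{s}$ from Theorem~\ref{thm:1}(ii) then supplies the right--hand side in terms of $\widehat{\e}^{(\gamma)}$ and the exponential functional $I$).

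The finiteness of $\nu_{1}(1)$ is the only genuine computation, and is the step I expect to be the main obstacle. Using that $T_{0}$ under $\p_{x}$ has the same law as $x^{\alpha}I$ under $\p$, one has $\p_{x}(s<T_{0})=\p(I>sx^{-\alpha})$, and the change of variables $u=x^{-\alpha}$ reduces $\nu_{1}(1)$ to a positive multiple of a Mellin--type moment $\e[I^{p}]$ of the exponential functional, for the appropriate exponent $p>0$. The strict inequality $\e(\exp\{\gamma\xi_{1}\},1<\zeta)<1$ is exactly what is needed: by the classical moment identity of Carmona--Petit--Yor, together with Proposition~1 of \cite{rivero-cramer}, strict inequality at level $\gamma$ guarantees finiteness of $\e[I^{p}]$ with the correct exponent, whereas the critical case $=1$ is known to yield only finiteness below, not at, the target moment. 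This is precisely the point where the gap between $\leq 1$ in Theorem~\ref{thm:1} and the strict $<1$ of the Corollary plays a role, and it is the sole technical content beyond invoking Theorem~\ref{thm:1} and the characterization \eqref{jumpin}.
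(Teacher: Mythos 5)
Your proposal is correct and follows essentially the same route as the paper, which justifies the corollary in exactly this way: the left-hand family is the jumping-in entrance law of the form \eqref{jumpin} (hence a ssel by the quoted extension of Vuolle-Apiala), its finiteness is supplied by Proposition~1 of \cite{rivero-cramer} — equivalently the moment computation for $I$ you sketch, using $T_{0}\stackrel{\text{Law}}{=}x^{\alpha}I$ and the strict inequality $\e(e^{\gamma\xi_{1}},1<\zeta)<1$ — and the identification with the right-hand side then comes from the uniqueness, up to multiplicative constants, of finite $\gamma$-ssel in Theorem~\ref{thm:1}. Your added detail on why strictness (as opposed to the critical case) is needed for the relevant moment of $I$ is precisely the content of the cited proposition, so there is no genuine difference in method.
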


In the following Theorem we deal with the case of $0$-ssel.

 \begin{teo}\label{thm:20}
Let $\left((X_{t})_{t\geq 0},(\p_{x})_{x>0}\right)$ be a $1/\alpha$-positive self-similar Markov process  and $\xi$ the L\'evy process associated to it via Lamperti's transformation. Assume that $X$ never hits zero and $\xi$ drifts towards $\infty.$ The family of measures $(\mu_{t}, t> 0)$ defined by the relation $$\mu_{s}f:=\widehat{\e}\left(f\left(\left(\frac{s}{I}\right)^{1/\alpha}\right)I^{-1}\right),$$ for $f:\re^{+}\to\re^{+}$ measurable, forms a $0$-ssel for $\left(X_{t}, {t\geq 0}\right).$ 
 \end{teo}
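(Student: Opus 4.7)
The plan is to verify the two defining conditions (EL-i) and (EL-ii) for the family $(\mu_s, s > 0)$.

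First, (EL-ii) with index $\gamma = 0$ is immediate: rewriting $f((s/I)^{1/\alpha}) = (H_{s^{1/\alpha}}f)(I^{-1/\alpha})$ inside the expectation yields $\mu_s f = \mu_1 H_{s^{1/\alpha}}f$, which is precisely the scaling required for $\gamma = 0$.

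For (EL-i), since $X$ never hits zero we have $P^X_t f(x) = \e_x[f(X_t)]$, and the task is to prove $\mu_s P^X_t f = \mu_{s+t}f$. I would work on a probability space supporting two independent L\'evy processes: $\widehat\xi$, drifting to $-\infty$, which defines $I = \int_0^\infty e^{\alpha\widehat\xi_u}\,du$ (a.s.\ finite), and an independent copy $\xi^{\ast}$ of $\xi$ that evolves $X$ via Lamperti's identity from the random starting point $(s/I)^{1/\alpha}$. Substituting gives
$$\mu_s P^X_t f = \widehat\e \otimes \e\left(I^{-1}\, f\left((s/I)^{1/\alpha}\, e^{\xi^{\ast}_V}\right)\right), \qquad V := \tau^{\ast}(tI/s),$$
where $\tau^{\ast}$ is the Lamperti time-change built from $\xi^{\ast}$. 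Setting $\widetilde I := (s+t)I/(s\,e^{\alpha\xi^{\ast}_V})$ makes the argument of $f$ equal to $((s+t)/\widetilde I)^{1/\alpha}$, and the whole task reduces to the measure-theoretic identity
$$\widehat\e \otimes \e\left(I^{-1}\, g(\widetilde I)\right) = \widehat\e\left(I^{-1}\, g(I)\right), \qquad g \geq 0 \text{ measurable}.$$

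To prove the displayed identity I plan to construct, pathwise on the joint space, a single L\'evy process $\widetilde\xi$ on $[0, \infty)$ by concatenating the time-reversal of $\xi^{\ast}|_{[0,V]}$ (defined by $\widetilde\xi_r := \xi^{\ast}_{V-r} - \xi^{\ast}_V$ for $r \in [0, V]$) with a downward-shifted copy of $\widehat\xi$ (namely $\widetilde\xi_r := -\xi^{\ast}_V + \widehat\xi_{r-V}$ for $r > V$). A direct decomposition gives
$$\int_0^\infty e^{\alpha\widetilde\xi_r}\,dr = \frac{tI}{s\,e^{\alpha\xi^{\ast}_V}} + \frac{I}{e^{\alpha\xi^{\ast}_V}} = \widetilde I,$$
so $\widetilde I$ is the exponential functional of $\widetilde\xi$, and the change-of-variables formula transforms the $I^{-1}$ weight into a $\widetilde I^{-1}$ weight. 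The heart of the identity is then to establish that $\widetilde\xi \stackrel{d}{=} \widehat\xi$: on $[0,V]$ this uses the L\'evy time-reversal duality $(\xi^{\ast}_r - \xi^{\ast}_{r-u})_{u \in [0,r]} \stackrel{d}{=} (\xi^{\ast}_u)_{u \in [0,r]}$ at deterministic $r$, together with the strong Markov property of $\xi^{\ast}$ at $V$, which makes the two concatenated pieces independent. The principal obstacle is the careful passage through the \emph{random} time $V$: the duality holds cleanly only at a deterministic $r$, so one must first condition on $I$ (under which $V$ becomes a stopping time of $\xi^{\ast}$ alone) and then on $V = v$, reducing the problem to the deterministic-time reversal before integrating back. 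This is the $\gamma = 0$ specialization of the Fitzsimmons-inspired technique underlying the proof of Theorem~\ref{thm:1}, where the Esscher change of measure trivializes and only these Lamperti-type manipulations remain. Once the identity is in hand,
$$\mu_s P^X_t f = \widehat\e\left(I^{-1}\, f\left(((s+t)/I)^{1/\alpha}\right)\right) = \mu_{s+t} f,$$
which completes the verification.
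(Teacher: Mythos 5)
Your verification of (EL-ii) and your reduction of (EL-i) to the identity $\widehat{\e}\otimes\e\bigl(I^{-1}g(\widetilde I)\bigr)=\widehat{\e}\bigl(I^{-1}g(I)\bigr)$ are correct, but the proposed proof of that identity has a genuine gap: both of its key claims fail. First, the change of variables does not turn the weight $I^{-1}$ into $\widetilde I^{-1}$. Writing everything as a functional of the concatenated path $\widetilde\xi$, one has $\int_0^V e^{\alpha\widetilde\xi_r}\,dr=u\,\widetilde I$ with $u=t/(s+t)$, and
\[
I=e^{-\alpha\widetilde\xi_V}\,(1-u)\,\widetilde I,\qquad\text{so}\qquad I^{-1}=\frac{e^{\alpha\widetilde\xi_V}}{(1-u)\,\widetilde I},
\]
and the random factor $e^{\alpha\widetilde\xi_V}/(1-u)$ is precisely the crux; it cannot be discarded. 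Second, $\widetilde\xi$ does not have the law of $\widehat\xi$: duality by time reversal holds at \emph{fixed} times (or under a stationary initial measure), not at the stopping time $V$. Conditioning on $I$ and then on $V=v$ does not reduce you to the deterministic-time statement, because given $\{V=v\}$ the pre-$v$ path of $\xi^{*}$ is a bridge-type object, no longer a L\'evy path; moreover the appended piece $\widehat\xi$ is not independent of $V$, since $V$ depends on $I$. The two defects cannot compensate each other: if $\widetilde\xi$ did have law $\widehat{\p}$, then, since the target identity is true, the above accounting would force $\widehat{\e}\bigl(e^{\alpha\widehat\xi_{V_u}}\mid I\bigr)=1-u$, where $V_u$ is the time at which $\int_0^{\cdot}e^{\alpha\widehat\xi}$ reaches $uI$. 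This is false: take $\alpha=2$ and $\widehat\xi_r=B_r-r/2$, so that the Lamperti image of $\widehat\xi$ is Brownian motion started at $1$ killed at $0$, $I$ is its hitting time of $0$, and $e^{2\widehat\xi_{V_u}}$ is the square of its position at time $uI$; by Williams' Bessel(3)-bridge description of that motion given $I$, the conditional expectation equals $(1-u)^2+3u(1-u)I\neq 1-u$. So the heart of your argument is missing, not merely under-detailed.

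This is exactly the point where the paper's machinery earns its keep: it never reverses at a random time under a probability law. It first places $\xi$ in its stationary regime by taking the Kuznetsov process $(Y,\mathbb{Q}^{0})$ attached to Lebesgue measure (the case $\theta=0$ of the unified proof); there, Mitro's formula legitimately reverses at fixed times and produces the dual $\widehat\xi$ together with the weight $e^{\alpha Y_s}$ integrated against the Lebesgue marginal --- which is exactly the factor your computation loses. The entrance-law property (EL-i) itself is not checked by a semigroup computation at all, but follows from Kaspi's theorem on time changes of processes with random birth and death, and the explicit expression for $\mu_t$ is then identified through the $q$-potential calculation leading to (\ref{qpotEL}). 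If you want a direct verification in the spirit of your reduction, the workable route is the one of \cite{bertoin-caballero} and \cite{BY2002}, which uses the Markov property of $\widehat\xi$ at a fixed time $v$ and the decomposition $\int_v^{\infty}e^{\alpha\widehat\xi_r}\,dr=e^{\alpha\widehat\xi_v}\times(\text{independent copy of }I)$, rather than a pathwise reversal at $V$.
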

In the case where the process $\xi$ has finite mean $0<m:=\er(\xi_{1})<\infty$ and the process is not arithmetic the form of the entrance law described in the latter theorem has been obtained in \cite{bertoin-caballero} and \cite{BY2002}. In those  papers the authors proved that the measures $\frac{1}{m}\mu_{t}$ are the weak limit of the law of $X_{t}$ under $\p_{x}$ as $x\to 0,$ so that the measures $\frac{1}{m}\mu_{t}$ are probability measures. In the case where $\xi$ has infinite mean, it can be proved that $\er(I^{-1})=\infty$ which implies that the measures $\mu_{t}$ are only $\sigma$-finite.  

The rest of the paper is organised as follows.  In Section~\ref{sect:2} we prove the Theorems \ref{thm:1} and \ref{thm:20}. The proof is given in a unified way, using results from the theory of Kusnetzov processes. In Section~\ref{embed}, we develop an embedding of stochastic processes, analogous to the identity in equation (\ref{entrancelaws}). In section \ref{extensions} we state without proof two extensions of the Theorem~\ref{thm:20}. Namely, in Subsection \ref{fluctuation} we provide an entrance law for processes similar to those appearing in \cite{ckpr}, closely related to the supremum process of pssMp. In Subsection~\ref{multi} an entrance law for the multi-self-similar Markov processes introduced in \cite{JACY}. We did not include the proof because the argument is very close to that provided for the Theorems \ref{thm:1} and \ref{thm:20}. A similar technique has also been used in the papers \cite{DDK} and \cite{pardo+panti+rivero} to construct entrance laws for real valued self-similar Markov processes.

\section{Proof of Theorems~\ref{thm:1} and \ref{thm:20}}\label{sect:2} 
The proof that (i) implies (ii) in Theorem~\ref{thm:1} and the claim in Theorem~\ref{thm:20} will be given in a unified way. We will assume  either of the following conditions on the underlying L\'evy process $\xi$
\begin{itemize}  
\item[(TH1)] there exists a $\theta>0$ such that $$\er(e^{\theta\xi_{1}}1_{\{1<\zeta\}})\leq 1.$$ 
\item[(TH2)] $\xi$ has an infinite lifetime and drifts towards $\infty.$
\end{itemize}
It is a standard fact that (TH1) is equivalent to require that $$\e(e^{\theta\xi_{t}}1_{\{t<\zeta\}})\leq 1,\qquad \forall t\geq 0,$$ see e.g. \cite{Satobook} Theorem 25.17. We will say that {\bf Cram\'er's condition} is satisfied with index $\theta>0$ if the equality holds 
$$\e(e^{\theta\xi_{t}}1_{\{t<\zeta\}})=1,\qquad \forall t\geq 0.$$ When (TH1) holds, we will denote by $\p^{(\theta)}$ the unique probability measure such that 
$$\p^{(\theta)}=e^{\theta\xi_{t}}\p\quad \text{on } \mathcal{F}_{t},\ \text{for all } t\geq 0.$$ It is easily verified that under $\p^{(\theta)}$ the canonical process still is a $\re\cup\{\infty\}$-valued L\'evy process, see e.g. \cite{Satobook} Chapter 33. Furthermore, under $\p^{(\theta)}$ the lifetime is infinite a.s. if and only if Cram\'er's condition is satisfied. Indeed, we have the equality
$$\p^{(\theta)}(t<\zeta)=\e(e^{\theta\xi_{t}}1_{t<\zeta})\leq 1,\qquad \text{for all } t\geq 0,$$ and thus if Cram\'er's condition is satisfied then, $\zeta=\infty,$ $\p^{(\theta)}$--a.s. We also have that if Cram\'er's condition is satisfied then, under $\p^{(\theta)},$ $\xi$ drifts towards $\infty,$ which in turn follows from the convexity of the mapping $\lambda\mapsto \log \e(e^{\lambda\xi_{1}}1_{\{1<\zeta\}}),$ on the set $\mathcal{C}=\{\beta\in \re: \e(e^{\beta\xi_{1}}1_{\{1<\zeta\}})<\infty\},$ see e.g. \cite{Satobook} Chapter 25. 

{\bf Now, we observe that when the condition (TH2) holds, the process $(\xi,\p)$ bears the same properties as $(\xi,\p^{(\theta)})$ does when Cram\'er's condition is satisfied with an index $\theta>0.$ Also, in this setting, Cram\'er's condition is trivially satisfied taking $\theta=0.$ This simple remark is the unifying point of the proofs of Theorems~\ref{thm:1} and \ref{thm:20}.  In order to give a unified argument, we will say that $\theta=0,$ whenever the conditions in (TH2) hold. In that case, Cram\'er's condition will be necessarily satisfied, and the respective measure $\p^{(0)}$ will be $\p$ itself, so no difference will be made. Naturally, the case $\theta>0$ will be exclusive to the setting (TH1). }

We will denote by $\widehat{\p},$ and $\widehat{\p}^{(\theta)},$ respectively, the law of the dual process $(-\xi_{t}, t\geq 0)$ under  ${\p},$ and ${\p}^{(\theta)}$ respectively.   Observe that the processes $(\xi,\p)$ and $(\xi,\widehat{\p}^{(\theta)})$ are in weak duality with respect to the measure 
$$\Lambda^{\theta}(dx):=e^{-\theta x}dx,\qquad x\in \re.$$
Indeed, let $f,g:\re\to\re^{+}$ measurable functions. Using Fubin's theorem and a change of variables we get:
\begin{equation*}
\begin{split}
\int_{\re}dx e^{-\theta x}f(x)\e_{x}(g(\xi_{t})1_{\{t<\zeta\}})&=\e\left(\int_{\re}dx e^{-\theta x}f(x)g(x+\xi_{t})1_{\{t<\zeta\}}\right)\\
&= \e\left(\int_{\re}dy e^{-\theta y}f(y-\xi_{t})e^{\theta\xi_{t}}g(y)1_{\{t<\zeta\}}\right)\\
&= \int_{\re}dy e^{-\theta y}g(y)\e\left(f(y-\xi_{t})e^{\theta\xi_{t}}1_{\{t<\zeta\}}\right)\\
&= \int_{\re}dy e^{-\theta y}g(y)\e^{(\theta)}\left(f(y-\xi_{t})1_{\{t<\zeta\}}\right)\\
&=\int_{\re}dy e^{-\theta y}g(y)\widehat{\e}^{(\theta)}_{y}\left(f(\xi_{t})1_{\{t<\zeta\}}\right).
\end{split}
\end{equation*}
It follows that the measure $\Lambda^{\theta}$ is excessive for both $(\xi,\p)$ and $(\xi,\widehat{\p}^{(\theta)})$. Moreover, by taking $f\equiv 1$ in the above identity, it is easily seen that Cram\'er's condition is satisfied for $(\xi,\p),$ with an index $\theta\geq 0,$  if and only if the measure is actually invariant for $(\xi,\p).$ Whilst the measure $\Lambda^{\theta}$ is invariant for $(\xi,\widehat{\p}^{(\theta)})$ if and only if the lifetime of $(\xi,\p)$ is infinite, as can also be seen from the above identity by taking $g\equiv 1.$ 

We denote by $(Y,\mathbb{Q}^{\theta})$ the Kusnetzov process associated to $\xi$ and $\Lambda^{\theta}$, see for instance \cite{DMfin} Chapter XIX or \cite{sharpebook}. $\mathbb{Q}^{\theta}$ is the unique sigma finite measure on $\mathbb{D}(\re,\re),$ such that 
$$\mathbb{Q}^{\theta}\left(Y_{t_{1}}\in{d}x_{1},Y_{t_{2}}\in{d}x_{2},\ldots, Y_{t_{n}}\in{d}x_{n}\right)=\Lambda^{\theta}({d}x_{1})Q_{t_{2}-t_{1}}(x_{1},{d}x_{2})\cdots Q_{t_{n}-t_{n-1}}(x_{n-1},{d}x_{n}),$$ for all  $-\infty<t_{1}<t_{2}<\cdots<t_{n}<\infty,$ and $x_{1},x_{2},\ldots,x_{n}\in\re,$ where $(Q_{t}, t\geq 0)$ denotes the transition semigroup of $(\xi,\p).$ An important fact about $(Y,\mathbb{Q}^{\theta})$ is that for any $x\in \re,$ its image measure under the translations of the path by $x,$ is the measure $e^{\theta x}\mathbb{Q}^{\theta}$. In particular, when $\theta=0$ the measure $,\mathbb{Q}^{\theta}$ is invariant under translations. Indeed, let $f_{1},\ldots,f_{n}$ be positive and measurable functions, $x\in\re$ and $-\infty<t_{1}<t_{2}<\cdots<t_{n}<\infty.$ Let $\phi_{x}f(y)=f(x+y),$ $y\in\re.$ We have the following identities that prove our claim. 
\begin{equation*}
\begin{split}
&\mathbb{Q}^{\theta}\left(f_{1}(Y_{t_{1}}+x)f_{2}(Y_{t_{2}}+x)\cdots f_{n}(Y_{t_{n}}+x)\right)\\
&=\int_{\re}{d}y_{1}e^{-\theta y_{1}}f_{1}(y_{1}+x)\er\left(f_{2}(\xi_{t_{2}-t_{1}}+y_{1}+x)\cdots f(\xi_{t_{n}-t_{1}}+y_{1}+x)\right)\\
&=e^{\theta x}\int_{\re}
{d}ze^{-\theta z}f_{1}(z)\er\left(f_{2}(\xi_{t_{2}-t_{1}}+z)\cdots f(\xi_{t_{n}-t_{1}}+z)\right)\\
&=e^{\theta x}\mathbb{Q}^{\theta}\left(f_{1}(Y_{t_{1}})f_{2}(Y_{t_{2}})\cdots f_{n}(Y_{t_{n}})\right)\\
\end{split}
\end{equation*}
Moreover, the image under time reversal of $(Y,\mathbb{Q}^{\theta}),$ at any finite time, gives a process with the same semigroup as $(\xi,\widehat{\p}^{(\theta)}),$ see \cite{DMfin} Ch. XIX- no.14.  

We denote by $\widetilde{\alpha}$ and $\widetilde{\beta}$ the birth and death times of $(Y,\mathbb{Q}^{\theta}).$ Observe that if Cram\'er's condition is satisfied with an index $\theta\geq 0,$ then $\widetilde{\alpha}=-\infty,$ $\mathbb{Q}^{\theta}$-a.s. While if $(\xi, \p)$ has an infinite lifetime then $\widetilde{\beta}=\infty,$ $\mathbb{Q}^{\theta}$-a.s. These are well known facts that come from the above observation that in these cases $\Lambda^{\theta}$ is invariant for $(\xi,\p),$ or $(\xi,\widehat{\p}^{(\theta)}),$ respectively, see e.g. Theorem 6.7 in \cite{getoorbook}. We define $(\rho(t), t\geq 0),$ by $$\rho(t)=\int^t_{\widetilde{\alpha}}e^{\alpha Y_{s}}{d}s, \qquad t\in\re.$$ Under $\mathbb{Q}^{\theta},$ the process $\rho$ is almost surely finite. This is obtained by conditioning on the future and applying Proposition 4.7 in \cite{mitro1979}, to get
$$\mathbb{Q}^{\theta}(\rho(t)=\infty,\widetilde{\alpha}<t<\widetilde{\beta})=\int_{\re}\Lambda^{\theta}({d}x)\widehat{\p}^{(\theta)}_{x}\left(\int^{\zeta}_{0}e^{\alpha\xi_{s}}{d}s=\infty\right)=0;$$ where the third identity is a consequence of the fact that the process $(\xi,\widehat{\p}^{(\theta)})$ either drifts towards $-\infty$ or has a finite lifetime, and thus $I=\int^{\zeta}_{0}e^{\alpha{\xi}_{s}}{d}s<\infty,$ $\widehat{\p}^{(\theta)}_{x}$-a.s. for $x\in\re.$ Whenever $\theta=0$, we have by the simple Markov property under $\mathbb{Q}^{\theta}$ that $\rho(\infty)=\infty,$ a.s. Indeed, given that $\rho(0)<\infty$ its suffices to prove that $\int^\infty_{0}e^{\alpha Y_{s}}{d}s$ is a.s. infinite under $\mathbb{Q}^{\theta}.$ For, we observe that when $\theta=0,$
\begin{equation*}
\begin{split}
\mathbb{Q}^{\theta}\left(\int^\infty_{0}e^{\alpha Y_{s}}{d}s<\infty, \widetilde{\alpha}<0<\widetilde{\beta}\right)=\int_{\re}\Lambda^{\theta}({d}x)\p_{x}\left(\int^\infty_{0}e^{\alpha Y_{s}}{d}s<\infty\right)=0;
\end{split}
\end{equation*}where the last identity follows from the fact that under $\p_{x}$ the L\'evy process $\xi$ drifts to $\infty.$ Let $C_{t}$ be the time change induced by $\rho,$ that is $$C_{t}=\inf\{s>0: \rho(s)>t\},\qquad t\geq0.$$ It follows from the previous discussion that $C_{t}<\infty$ $\mathbb{Q}^{\theta}$-a.s. By the theory of time changes developed by Kaspi \cite{Kaspi} it follows that the family of measures 
$$\eta^{\theta}_{t}f:=\mathbb{Q}^{\theta}\left(f\left(e^{Y_{C_{t}}}\right), 0<C_{t}<1\right), \qquad t>0,$$ is an entrance law for the pssMp $X.$ It is important to mention that for each $t>0,$ $\eta^{\theta}_{t}$ has the following scaling property. If $H_{c}$ denotes the dilation operator $H_{c}f(x)=f(cx), x\in\re,$ we have the equality for any $f:\re\to\re$ measurable and positive
\begin{equation}\label{scaling}
\eta^{\theta}_{t}H_{e^{x}}f=e^{{\theta}x}\eta_{te^{x\alpha}}f.
\end{equation}
This fact is an easy consequence of the effect of translations under $\mathbb{Q}^{\theta},$ that we mentioned above, as the following calculations show
\begin{equation}
\begin{split}
\eta^{\theta}_{t}H_{e^{x}}f&=\mathbb{Q}^{\theta}\left(f\left(e^{(x+Y)_{C_{t}}}\right), 0<C_{t}<1\right)\\
&=\mathbb{Q}^{\theta}\left(f\left(\exp\left\{(x+Y)_{C_{te^{x\alpha}}(x+Y)}\right\}\right), 0<C_{te^{x\alpha}}(x+Y)<1\right)\\
&=e^{{\theta}x}\mathbb{Q}^{\theta}\left(f\left(\exp\left\{Y_{C_{te^{x\alpha}}(Y)}\right\}\right), 0<C_{te^{x\alpha}}(Y)<1\right)\\
&=e^{{\theta}x}\eta^{\theta}_{te^{x\alpha}}f,
\end{split}
\end{equation}
where we used that $$C_{te^{x\alpha}}(x+Y):=\inf\{s>0: \int^{s}_{\widetilde{\alpha}}\exp\{\alpha(x+Y_{u})\}du>te^{x\alpha}\}=C_{t}=C_{t}(Y),\qquad x\in \re.$$ The latter fact has as a particular consequence that 
\begin{equation}\label{scaling2}
\eta^{\theta}_{t}f=t^{-\theta/\alpha}\eta^{\theta}_{1}H_{t^{1/\alpha}}f, \qquad t>0.\end{equation}  We will now prove that for $t>0$ the above constructed measures $(\eta^{\theta}_{t}, t>0)$ and the measures $(\mu^{\theta}_{t}, t>0)$  as defined in Theorem \ref{thm:1}-(ii) and Theorem~\ref{thm:20} are equal.  Let $q>0.$ Applying Fubini's theorem and inverting the time change $C,$ we obtain that for every function $f$ positive and measurable 
\begin{equation*}
\begin{split}
\int^\infty_{0}{\rm d}te^{-qt}\eta^{\theta}_{t}f&=\mathbb{Q}^{\theta}\left(\int^\infty_{0}{d}t e^{-q t}f(e^{Y_{C_{t}}})1_{\{0< C_{t}<1\}}\right)\\
&=\mathbb{Q}^{\theta}\left(\int^\infty_{-\infty}{d}s e^{\alpha Y_{s}}e^{-q \rho(s)}f(e^{Y_{s}})1_{\{0< s <1\}}\right)\\
&=\int^\infty_{-\infty}{d}s1_{\{0< s <1\}} \mathbb{Q}^{\theta}\left(e^{\alpha Y_{s}}e^{-q \rho(s)}f(e^{Y_{s}})\right).
\end{split}
\end{equation*}
Now we condition with respect to the future of $Y$, use Mitro's formula for time reversal  (Proposition 4.7 in \cite{mitro1979}), and that the marginal law of $Y$ under $\mathbb{Q}^{\theta}$ is $\Lambda^{\theta},$  to obtain the following identities
\begin{equation*}
\begin{split}
\int^\infty_{0}{\rm d}te^{-qt}\eta^{\theta}_{t}f&=\int^1_{0}{d}s\mathbb{Q}^{\theta}\left(\mathbb{Q}^{\theta}\left(e^{\alpha Y_{s}}e^{-q \rho(s)}f(e^{Y_{s}}) | \sigma\left(Y_{u}, u\geq s\right)\right)\right)\\
&=\int^1_{0}{d}s \mathbb{Q}^{\theta}\left(f(e^{Y_{s}})e^{\alpha Y_{s}}\widehat{\e}^{(\theta)}_{Y_{s}}\left(\exp\left\{{-q \int^\infty_{0}e^{\alpha {\xi}_{u}}}{d}u\right\}\right)\right)\\ 
&=\int^1_{0}{d}s\int_{\re}{d}x f(e^{x})e^{(\alpha-\theta)x}\widehat{\e}^{(\theta)}\left(\exp\{-qe^{\alpha x}I\}\right)\\
&=\int_{\re}{d}x f(e^{x})e^{(\alpha-\theta) x}\widehat{\e}^{(\theta)}\left(\exp\{-qe^{\alpha x}I\}\right).
\end{split}
\end{equation*}
Finally, an elementary change of variables, $t=e^{\alpha x}I,$ leads to
\begin{equation}\label{qpotEL} 
\int^\infty_{0}{\rm d}te^{-qt}\eta^{\theta}_{t}f=\int^{\infty}_{0}{d}te^{-q t}t^{-\theta/\alpha}\widehat{\e}^{(\theta)}\left(f\left(\left(\frac{t}{I}\right)^{1/\alpha}\right)I^{\frac{\theta}{\alpha}-1}\right).
\end{equation}
Besides, the equality in (\ref{scaling}) gives 
$$\int^\infty_{0}{\rm d}te^{-qt}\eta^{\theta}_{t}f=\int^{\infty}_{0}dte^{-qt}t^{-\theta/\alpha}\eta^{\theta}_{1}H_{t^{1/\alpha}}f.$$ Putting together the latter and former identities we get the equality of measures 
$$\frac{dt}{t^{\theta/\alpha}}\eta^{\theta}_{1}(t^{1/\alpha}dz)=\frac{dt}{t}{\mu}^{\theta}_{1}(t^{1/\alpha}dz),$$ with $\mu^{(\theta)}_{1}$ given by
$$\mu^{\theta}_{1}(dz)=\widehat{\p}^{(\theta)}\left(\left(\frac{1}{I}\right)^{1/\alpha}\in dz\right)z^{\alpha-\theta}, \qquad z>0.$$ We deduce therefrom the equality of measures 
$$\eta^{\theta}_{1}(dz)=\mu^{\theta}_{1}(dz).$$ 
The claim follows from (\ref{scaling2}). We should now justify that when $\theta>0,$ $$\mu^{\theta}_{1}1=\widehat{\e}^{\theta}(I^{\frac{\theta}{\alpha}-1})<\infty,$$ but this is a consequence of the Lemma 2 in \cite{nasc-cramer} and Lemma 3 in \cite{r-tail}, because these results ensure that this condition is implied by the condition
$$\widehat{\e}^{\theta}\left(e^{\theta\xi_{1}}1_{\{1<\zeta\}}\right)={\e}\left(1_{\{1<\zeta\}}\right)\leq 1.$$
 This finishes the proof of the implication (i) $\Rightarrow$ (ii) in Theorem \ref{thm:1} and the claim in Theorem~\ref{thm:20}. 

\subsection{Continuation of the proof of Theorem~\ref{thm:1}}
That (ii) implies (iii) is straightforward. We are left to prove that (iii) implies (i).

In the paper \cite{haas+rivero}, Lemma 5.2, it has been proved that there exists a bijection between the family of $\gamma$--ssel for a pssMp $X$, with $\gamma>0$, for which the measure corresponding to the time index $1$ is a probability measure, and the family of quasi-stationary laws for the Orstein-Uhlenbeck type process $$U:=(U_{t}=e^{-\alpha t}X_{e^{t}-1},\ 0\leq T^{U}_{0}:=\log(1+T_{0})).$$  Recall that a probability measure $\nu$ is a quasi-stationary law for $U$ if we have the equality of measures  
$$\frac{\int_{(0,\infty)}\nu(dx)\p_{x}(U_{t}\in dy, t<T_{0})}{\int_{(0,\infty)}\nu(dx)\p_{x}(t<T_{0})}=\nu(dy).$$ In that case, there exists a $\gamma>0$ such that $\int_{(0,\infty)}\nu(dx)\p_{x}(t<T_{0})=e^{-\gamma t},$ $t>0$ and 
$$\int_{(0,\infty)}\nu(dx)\p_{x}(X_{t}\in dy, t<T_{0})=e^{-\gamma t}\nu(dy).$$ The mentioned bijection is as follows. Given $\nu$ a quasi-stationary law for $U,$ as above, the family of measures defined by  
$$\eta^{\gamma}_{s}f:=s^{-\gamma/\alpha}\nu H_{s^{1/\alpha}}f, s>0.$$
constitutes a $\gamma$-ssel for $X$ such that $\eta_{1}1=1.$ Reciprocally, given a $\gamma$-ssel for $X$, such that $\eta_{1}1=1,$ the measure $\nu:=\eta_{1}$ defines a quasi-stationary law $U$. It follows that if (iii) is satisfied then there is also a quasi-stationary law $U$, which by Corollary 5.3 in \cite{haas+rivero} implies that the condition (i) in Theorem \ref{thm:1} holds. 

We will next justify the second part of Theorem~\ref{thm:1}. We assume that (i) holds for some $\beta>0$. As we mentioned before, the set $\mathcal{C}=\{\beta\in \re: \e(e^{\beta\xi_{1}}1_{\{1<\zeta\}})<\infty\},$ is convex, it contains the element $0,$ and hence the whole interval $[0,\beta].$ It follows that (i) holds for any $0<\beta^{\prime}<\beta.$ We denote by $\eta^{\beta^{\prime}}$ and $\eta^{\beta}$ the associated entrance laws for $X.$ By (iii) we know they can be taken to be such that $$\eta^{\beta^{\prime}}_{1}1=1=\eta^{\beta}_{1}1.$$  In order to get the claimed identities we make a short digression to recall further results obtained in \cite{haas+rivero}. There, it has been proved that whenever there is a $\theta>0$ such that $$\er(e^{\theta(\alpha\xi_{1})},1<\zeta)\leq 1,$$ then there is a unique in law random variable $R_{\theta}$ such that, if it is taken independent of $I,$ then 
$$IR_{\theta}\stackrel{\text{Law}}{=}\mathcal{Z}_{\theta},$$ where $\mathcal{Z}_{\theta}$ follows a Pareto distribution with parameter $\theta,$ viz.
$$\p(\mathcal{Z_{\theta}}\in dy)=\frac{\theta}{(1+y)^{1+\theta}}dy, \qquad y>0.$$ Besides, is worth noticing that elementary properties of the Beta and Gamma distributions imply that if $0<\theta^{\prime}<\theta$ and $\mathcal{B}_{\theta^{\prime},\theta-\theta^{\prime}}$ is an independent $(\theta^{\prime},\theta-\theta^{\prime})$--Beta random variable, viz. $$\p(\mathcal{B}_{\theta^{\prime},\theta-\theta^{\prime}}\in dy)=\frac{\Gamma(\theta)}{\Gamma(\theta-\theta^{\prime})\Gamma(\theta^{\prime})}y^{\theta^{\prime}-1}(1-y)^{(\theta-\theta^{\prime})-1}1_{\{0<y<1\}}dy,$$
then we have the equality in law $$\mathcal{Z}_{\theta^{\prime}}\stackrel{\text{Law}}{=}\frac{\mathcal{Z}_{\theta}}{\mathcal{B}_{\theta^{\prime},\theta-\theta^{\prime}}}.$$ We deduce therefrom the identity in law
$$R_{\theta^{\prime}}\stackrel{Law}{=}\frac{R_{\theta}}{\mathcal{B}_{\theta^{\prime},\theta-\theta^{\prime}}}.$$ This being said, we apply these facts to $\theta=\beta^{\prime}/\alpha$ and $\theta=\beta/\alpha,$ with $0<\beta^{\prime}<\beta.$ We should next relate the factors $R_{\beta^{\prime}/\alpha}$ and $R_{\beta/\alpha}$ with $\eta^{\beta^{\prime}}$ and $\eta^{\beta},$ respectively. Let $J_{\beta^{\prime}}$ and $J_{\beta}$ be random variables with law $\eta^{\beta^{\prime}}_{1}$ and $\eta^{\beta}_{1},$ respectively. Arguing as in page 482 in \cite{haas+rivero}, raplacing $\nu$ there by $\eta^{\beta^{\prime}}_{1}$ and $\eta^{\beta}_{1}$, it is proved that if these random variables are taken independent of $I$ under $\p,$ then 
$$IJ^{\alpha}_{\beta^{\prime}}\stackrel{\text{Law}}{=}\mathcal{Z}_{\beta^{\prime}}\qquad IJ^{\alpha}_{\beta}\stackrel{\text{Law}}{=}\mathcal{Z}_{\beta}.$$ Which implies $R_{\beta^{\prime}}\stackrel{\text{Law}}{=}J^{\alpha}_{\beta^{\prime}},$ $R_{\beta}\stackrel{\text{Law}}{=}J^{\alpha}_{\beta},$ and 
$$J_{\beta^{\prime}}\stackrel{\text{Law}}{=}\frac{J_{\beta}}{\mathcal{B}^{1/\alpha}_{\frac{\beta^{\prime}}{\alpha},\frac{\beta-\beta^{\prime}}{\alpha}}}.$$ With this information and the identity (\ref{scaling2}) it is easily verified that 
$$\eta^{\beta^{\prime}}_{s}f=s^{\frac{\beta-\beta^{\prime}}{\alpha}}\int_{(0,1)}\p\left(\mathcal{B}_{\frac{\beta^{\prime}}{\alpha},\frac{\beta-\beta^{\prime}}{\alpha}}\in dz\right)\eta^{\beta}_{s}H_{z^{-1/\alpha}}f,$$ for any $f$ positive and measurable. This is what the identity (\ref{entrancelaws}) states. The constant $C_{\beta^{\prime},\beta}$ in that equation appears when we remove the condition that the entrance law is such that the measure with time index $1$ is a probability measure.  Observe that the above argument also proves that for each $\gamma>0$ there is at most one $\gamma$-ssel, up to a multiplicative constant, which is given by the formula in (ii) in Theorem~\ref{thm:1}

\section{A pathwise extension of the identity (\ref{entrancelaws})}\label{embed}
Our aim in this section is to provide a pathwise explanation of the curious identity (\ref{entrancelaws}). For that end we will carryout the following program.  We will construct a self-similar process, $X^{(\beta)},$ associated to the entrance law $\eta^{\beta}$, constructed in (ii) in Theorem~\ref{thm:1}.  Then, for any pair $0<\beta^{\prime}<\beta$ we will relate, via a time change, the corresponding processes $X^{(\beta^{\prime})}$ and $X^{(\beta)}.$ We will see that we can embed the paths of $X^{(\beta^{\prime})}$ into those of $X^{(\beta)}.$

For this end, it will be necessary to assume throughout this section  that (i) in Theorem \ref{thm:1} is satisfied with some index $0<\beta<\alpha.$ Under these assumptions, the papers \cite{nasc-cramer} and \cite{fitzsimmons2006} ensure the existence of a recurrent extension $X^{(\beta)}$ of $X.$ That is, a process for which the state $0$ is a recurrent and regular state, and such that $X^{(\beta)}$ killed at its first hitting time of $0$ has the same law as $X.$  We denote by $N^{(\beta)}$ the It\^o's excursion measure from $0$ for $X^{(\beta)}.$ The measure  $N^{(\beta)}$ satisfies
\begin{enumerate}\item[(i)] $N^{(\beta)}$ is carried
by the set of paths $$\{\omega\in\mathbb D^+ \ | \ T_0(\omega) > 0 \ \text{and} \
X_t(\omega) = 0, \forall t \geq T_0\};$$
\item[(ii)] for every bounded measurable $f:[0,\infty)\to \re$ and each $t,s > 0$ and $\Lambda\in \mathcal G_t$ $$N^{(\beta)} (f(X_{t+s}), \Lambda\cap \{t < T_0\}) = N^{(\beta)}(\e_{X_t}(f(X_{s}), s<T_{0}), \Lambda \cap \{t < T_0\});$$

\item[(iii)] $N^{(\beta)}(1-e^{-T_0}) < \infty.$
\end{enumerate}
The entrance law associated to $N^{(\beta)},$ is defined by $$N^{(\beta)}_s(dy):=N^{(\beta)}(X_s\in dy, s<T_0), \quad
s>0.$$ According to the results in Lemma 2 in~\cite{rivero-cramer}, the entrance law $(N^{(\beta)}_{s}, s>0)$ is a $\beta$-ssel for $X.$ From Theorem~\ref{thm:1}, this is, up to a multiplicative constant, equal to that in (ii) in the op. cit. Theorem. In the case where Cram\'er's condition is satisfied, $\e(e^{\beta\xi_{1}}, 1<\zeta)=1,$ we have that $\lim_{t\to 0+}N^{(\beta)}_{t}1_{(a,\infty)}=0,$ for all $a>0,$ in which case we say that the recurrent extension leaves $0$ continuously. Whilst in the case $\e(e^{\beta\xi_{1}}, 1<\zeta)<1,$ we have  $\lim_{t\to 0+}N^{(\beta)}_{t}1_{(a,\infty)}>0,$ for all $a>0,$ we say that the recurrent extension leaves $0$ by a jump. In this case there is a {\it jumping in measure} $\eta$ such that $$\eta({d}x)= x^{-1-\gamma/\alpha}{d}x,
\qquad x>0,$$ and
$$\eta_{t}(dy)=\int_{(0,\infty)}\frac{dx}{x^{1+\gamma/\alpha}}\p_{x}(X_{t}\in dy, t<T_{0}),\qquad y>0,\ t>0.$$
Moreover, in the paper \cite{rivero-cramer} the process $X^{(\beta)}$ is constructed using It\^o's synthesis theorem. In what follows we will sketch the construction of another version of $X^{(\beta)}$, which will be such that its excursions are {\it colored}.  

We take a new process $\widetilde{X}$ taking values in $[0,\infty)\times\{-1,1\}$, for which $\{0\}\times\{-1,1\}$ is identified to a cemetery state, and it is such that when issued from $(x,y)\in (0,\infty)\times\{-1,1\},$ $\widetilde{X}_{t}=(X_{t},y),$ $t\geq 0,$ and $X$ is issued from $x.$ We understand $\widetilde{X}$ as a colored particle that moves, following the same dynamics as $X,$ and that take the color red, for 1, and blue, for $-1;$ and once the color is chosen it remains fixed until the absortion at zero of the particle. The semigroup of $\widetilde{X},$ say $(\widetilde{P}_{t}, t\geq 0),$ equals
$$\widetilde{P}_{t}((x,y), da\otimes dz)= \p_{x}(X_{t}\in da, t<T_{0})\otimes \delta_{y}(dz),\qquad (x,y), (a,b)\in [0,\infty)\times\{-1,1\},\ t\geq 0.$$  Let $\widetilde{N}^{(\beta)}$ be the measure on $\mathbb D^+\times\{-1,1\}$ defined by $\widetilde{N}^{(\beta)}=N^{(\beta)}\otimes (\frac{1}{2}\delta_{1}(dx)+\frac{1}{2}\delta_{-1}(dx)).$  Observe that $\widetilde{N}^{(\beta)}$ bears similar properties to those in (i)-(iii) above. 

Realize a marked Poisson point process $\Delta=((\Delta_s, U_{s}), s>0)$ on $\mathbb D^+\times\{-1,1\}$ with characteristic measure $\widetilde{N}^{(\beta)}.$ Thus each atom
$(\Delta_s, U_{s})$ is formed of a path $(\Delta_{s})$ and its mark $U_{s}.$ The marks are independent and follow a symmetric Bernoulli distribution. We will denote by $T_0(\Delta_s,U_{s})$ the lifetime of the path $(\Delta_{s}, U_{s})$,
i.e. $$T_0(\Delta_s, U_{s})=\inf \{t > 0 : \Delta_s(t)=0\}.$$ Set
$$\sigma_t=\sum_{s\leq t}T_0(\Delta_s, U_{s}),\qquad t\geq 0.$$ Since
$$\widetilde{N}^{(\beta)}(1-e^{-T_0})={N}^{(\beta)}(1-e^{-T_0})<\infty$$ it follows that for every $t>0,$ $\sigma_t<\infty$
a.s. It follows that the process $\sigma=(\sigma_t, t\geq 0)$ is
an increasing c\`adl\`ag process with stationary and independent
increments, i.e. a subordinator. Its law is characterized by its
Laplace exponent $\phi_{\beta},$ defined by
$$\e(e^{-\lambda\sigma_1})=e^{-\phi_{\beta}(\lambda)},\qquad \lambda>0,$$ and
$\phi_{\beta}(\lambda)$ can be expressed thanks to the L\'evy--Kintchine's
formula as $$\phi_{\beta}(\lambda)= \int (1-e^{-\lambda s})\nu_{\beta}(ds),$$ with
$\nu_{\beta}$ a measure such that $\int s \wedge 1 \ \nu_{\beta}(ds)<\infty,$
called the L\'evy measure of $\sigma;$ see e.g.
Bertoin~\cite{bertoin-subordinators} \S~3 for background. An application of
the exponential formula for Poisson point process gives
$$\er(e^{-\lambda\sigma_1})=e^{-\widetilde{N}^{(\beta)}(1-e^{-\lambda T_0})}=e^{-{N}^{(\beta)}(1-e^{-\lambda T_0})},\qquad \lambda>0,$$ i.e.
$\phi_{\beta}(\lambda)=\widetilde{N}^{(\beta)}(1-e^{-\lambda T_0})$ and the tail of the L\'evy
measure is given by $$\nu_{\beta}(s,\infty)=\widetilde{N}^{(\beta)}(s < T_0)=n_s 1,\qquad s>0.$$
Observe that if we assume $\phi_{\beta}(1)=\widetilde{N}^{(\beta)}(1-e^{-T_0})=1$ then $\phi_{\beta}$ is
uniquely determined. Since $\widetilde{N}^{(\beta)}$ has infinite mass, $\sigma_t$ is
strictly increasing in $t.$ Let $L_t$ be the local time at $0,$
i.e. the inverse of $\sigma$ $$L_t=\inf\{ r > 0 :
\sigma_r>t\}=\inf\{r>0 : \sigma_r\geq t\}.$$ Define a process
$(\widetilde{X}^{(\beta)}_t, t \geq 0)$ as follows. For $t\geq 0,$ let $L_t=s,$
then  $\sigma_{s-}\leq t\leq \sigma_{s},$ set
\begin{equation}\label{itoprogram}\widetilde X^{(\beta)}_t=\begin{cases}
(\Delta_s(t-\sigma_{s-}), U_{s}) &\quad\text{if}\quad \sigma_{s-}<\sigma_{s}\\
(0,0) &\quad\text{if}\quad \sigma_{s-}=\sigma_{s}\ \text{or}\ s=0.
\end{cases}\end{equation}
That the process so constructed is a Markov process taking values in $[0,\infty)\times\{-1,0,1\}$ is a consequence of the main results in~\cite{MR705615} and \cite{MR859838}. The arguments needed to check that the hypotheses in these papers are satisfied are an elementary extension of those given in the paper~\cite{rivero-cramer} to verify that the corresponding conditions are satisfied by $X.$ We will denote by $\widetilde{\p}^{(\beta)}$ its law.

It is easily verified, from the construction, that the projection of $\widetilde{X}^{(\beta)}$ in its first coordinate, is a version of the self-similar Markov process $X^{(\beta)}$.  For $\beta^{\prime}<\beta,$ we will next construct a version of $X^{(\beta^{\prime})}$ from $\widetilde{X}^{(\beta)}.$ The rest of this section will use facts from the fluctuation theory of L\'evy processes, we refer to \cite{bertoinbook} for background on this topic.

Fix $q > 0,$ and let $A^{+},$ $A^{-,q}$ be the additive functionals of $\widetilde{X}^{(\beta)}$
defined by
$$A^{+}_t=\int^{t}_0 1_{\{\widetilde{X}^{(\beta)}_{s}\in (0,\infty)\times\{1\}\}} {d}s, \qquad A^{-,q}_t=\int^{t}_0 q^{\alpha/\beta} 1_{\{\widetilde{X}^{(\beta)}_s\in(0,\infty)\times\{-1\}\}} {d}s,\qquad t\geq
0,$$ and we introduce the time change $\tau^{(q)},$ which is the
generalized inverse of the fluctuating additive functional
$A^{+}-A^{-,q},$ that is
$$\tau^{(q)}(t)=\inf\{s>0: A^{+}_s-A^{-,q}_s > t\},\qquad t\geq 0,\qquad
\inf{\emptyset}=\infty.$$ Now, let $Y^{(\beta)}$ to be the projection of the process $\widetilde{X}^{(\beta)}$ in the first coordinate and $Y^{(+,q)}$ be the process $Y$  time changed by $\tau^{(q)},$
\begin{align*}
Y^{(+,q)}_t=\begin{cases} Y_{\tau^{(q)}(t)}
& \text{if}\ \tau^{(q)}(t)<\infty,\\
\Delta & \text{if}\ \tau^{(q)}(t)=\infty,\\
\end{cases}
\end{align*}
where $\Delta$ is a cemetery or absorbing state. Notice that the time change has the effect of deleting all the blue paths in $\widetilde{X}^{(\beta)},$ together with some red paths. So, the process, $Y^{(+,q)}$, is obtained by pasting together red paths, to which we have deleted a random length in its starting part, and the distribution of the deleted length depends on $q.$  A more precise description is given in the following Lemma. 
\begin{lemma}\label{lemma:1const}Under $\widetilde{\p}^{(\beta)}$ the process $Y^{(+,q)},$ is a
positive $\alpha$-self-similar Markov process for which $0$ is a
regular and recurrent state and that leaves $0$ by a jump according
to the jumping-in measure
$c_{\alpha,\beta,\rho}\eta_{\rho\beta},$ with
$$\eta_{\beta\rho}({d}x)= x^{-1-\rho\beta}{d}x,
\qquad x>0,$$ where $\rho$ is given by
$$\rho=\frac{1}{2}+\frac{\alpha}{\pi\beta}\arctan\left(\frac{1-q}{1+q}\tan\left(\frac{\pi\beta}{2\alpha}\right)\right)\in]0,1[,$$
and $0<c_{\alpha,\beta,\rho}=\frac{\beta(1-\rho)}{2}N^{(\beta)}(X^{\beta\rho}_1,
1<T_0)<\infty,$ is a constant.
\end{lemma}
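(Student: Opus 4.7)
The plan is to reduce everything to the study of the L\'evy process $S_t := A^+_{\sigma_t}-A^{-,q}_{\sigma_t}$ obtained by sampling the fluctuating additive functional at the subordinator $(\sigma_t)$ of return times to $0$, and to identify $S$ as a strictly stable L\'evy process whose positivity parameter is exactly $\rho$. Under the Poisson point process $\Delta=((\Delta_s,U_s), s>0)$ with characteristic measure $\widetilde{N}^{(\beta)}$, each excursion of length $T_0(\Delta_s)=u$ is coloured red or blue independently with probability $\tfrac{1}{2}$; a red excursion contributes a positive jump $+u$ to $A^+-A^{-,q}$, a blue one a negative jump of size $-q^{\alpha/\beta}u$. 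Hence $S$ is pure jump with positive jumps of intensity $\tfrac{1}{2}\nu_\beta(du)$ on $(0,\infty)$ and negative jumps of intensity $\tfrac{1}{2}\nu_\beta(du)$ pushed forward by $u\mapsto -q^{\alpha/\beta}u$.

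The $\beta$-self-similarity of $N^{(\beta)}$, via (\ref{scaling2}) applied with $f\equiv 1$, yields $\nu_\beta(s,\infty)=N^{(\beta)}(s<T_0)=c\,s^{-\beta/\alpha}$ for some $c\in(0,\infty)$, hence $\nu_\beta(du)=(c\beta/\alpha)u^{-1-\beta/\alpha}du$. Setting $\delta:=\beta/\alpha\in(0,1)$ (this is where the hypothesis $\beta<\alpha$ is used), the change of variable $v=-q^{\alpha/\beta}u$ turns the negative side of the L\'evy measure of $S$ into $(c\delta q/2)|v|^{-1-\delta}dv$; thus $S$ is strictly $\delta$-stable with skewness $\beta_{\mathrm{sk}}=(1-q)/(1+q)\in(-1,1)$. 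The classical formula for the positivity parameter of an asymmetric strictly stable process then gives
\[
\p(S_1>0)=\tfrac{1}{2}+\tfrac{1}{\pi\delta}\arctan\!\bigl(\beta_{\mathrm{sk}}\tan(\pi\delta/2)\bigr),
\]
which after substituting $\delta=\beta/\alpha$ and $\beta_{\mathrm{sk}}=(1-q)/(1+q)$ reproduces exactly the expression for $\rho$ in the statement; the bounds $\rho\in(0,1)$ follow from $|\beta_{\mathrm{sk}}|<1$.

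Finally, I would read the properties of $Y^{(+,q)}$ off the fluctuation theory of $S$. The map $\tau^{(q)}$ is by definition the right-continuous inverse of $A^+-A^{-,q}$, and its trace on the local-time clock of $X^{(\beta)}$ at $0$ is the ascending ladder time process of $S$. By classical fluctuation theory of strictly $\delta$-stable processes (see \cite{bertoinbook}, Ch.~VIII), the ascending ladder height subordinator of $S$ is itself strictly stable of index $\delta\rho=\beta\rho/\alpha$. Re-running It\^o's synthesis (\ref{itoprogram}) with this subordinator in place of $\sigma$ and the induced excursion measure shows that $Y^{(+,q)}$ is a pssMp with scaling exponent $\alpha$ for which $0$ is regular and recurrent, and which leaves $0$ by a jump whose jumping-in measure is $\beta\rho$-homogeneous and therefore equal to $c_{\alpha,\beta,\rho}\eta_{\rho\beta}$ up to the stated constant; the value of $c_{\alpha,\beta,\rho}$ is fixed by matching the associated entrance law against the Laplace-transform representation (\ref{qpotEL}), producing the factor $\tfrac{\beta(1-\rho)}{2}N^{(\beta)}(X_1^{\beta\rho},1<T_0)$. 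The main technical hurdle will be to rigorously identify the excursion measure of $Y^{(+,q)}$ with the one built from ladder heights of $S$; this is achieved by combining Kaspi's time-change machinery (already used in the proof of Theorems~\ref{thm:1} and~\ref{thm:20}) with the coloured-excursion It\^o synthesis of \cite{MR705615,MR859838}.
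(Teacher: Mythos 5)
Your reduction to the two‑sided process $S=Z^{+}-Z^{-,q}$ is sound and coincides with the paper's auxiliary Lemma~\ref{stable}: the colouring makes the positive and negative parts independent $\beta/\alpha$-stable subordinators with L\'evy densities proportional to $\tfrac{1}{2}u^{-1-\beta/\alpha}$ and $\tfrac{q}{2}u^{-1-\beta/\alpha}$, and your appeal to the classical (Zolotarev) formula for the positivity parameter is a legitimate, indeed more explicit, justification of the stated value of $\rho$; likewise identifying the zero set of $Y^{(+,q)}$ with the closed range of the ascending ladder height of $S$, a $\beta\rho/\alpha$-stable subordinator, does give regularity and recurrence of $0$.

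The gap is in the final step, which is the actual content of the lemma. You assert that the jumping-in measure is ``$\beta\rho$-homogeneous and therefore equal to $c_{\alpha,\beta,\rho}\eta_{\rho\beta}$'', with the constant ``fixed by matching against (\ref{qpotEL})''. First, the homogeneity argument presupposes that $Y^{(+,q)}$ is a recurrent extension of the minimal process $X$ which leaves $0$ by a jump, and that is exactly the excursion-measure identification you defer as ``the main technical hurdle''; second, and more seriously, homogeneity can only determine the jumping-in measure up to an unknown multiplicative factor, and that factor is not intrinsic: it depends on the normalisation of the local time of $Y^{(+,q)}$ at $0$, which in this construction is inherited from the local time of the reflected process $\overline{Z}-Z$ (the paper fixes $\widehat{k}=1$). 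No ``matching against (\ref{qpotEL})'' can therefore produce the specific value $\tfrac{\beta(1-\rho)}{2}N^{(\beta)}\bigl(X_1^{\beta\rho},1<T_0\bigr)$, nor its finiteness. What is needed, and what the paper actually does, is to compute the law of the starting point of a generic excursion of $Y^{(+,q)}$: that starting point is the position of a red excursion of $\widetilde{X}^{(\beta)}$ at the undershoot time $Z_{R-}$ of $Z$ at its jump across the running supremum, and since the undershoot and the excursion length are \emph{not} independent one must use the fluctuation identity $\overline{N}\bigl(Z_{R-}\in dx,\,-(Z_{R}-Z_{R-})\in dy\bigr)=\widehat{V}(dx)\Pi_{Z}(dy)1_{\{0<x<y\}}$ together with the regular conditional laws $\widetilde{N}^{(\beta)}(\cdot\,|\,T_0=s)$ to obtain $N^{\beta,+,q}(f(Y_0))=\int\widehat{V}(dt)\,\widetilde{N}^{(\beta)}(f(Y_t),\,t<T_0,\,U=1)$. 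Only then do the explicit stable form of $\widehat{V}$ and the scaling of $N^{(\beta)}_t$ yield $c_{\alpha,\beta,\rho}\eta_{\beta\rho}$ with the stated constant, whose finiteness still has to be verified (the paper does this via Theorem~\ref{thm:1}, writing $N^{(\beta)}\bigl(X_1^{\beta\rho},1<T_0\bigr)=\widehat{\e}^{(\beta)}\bigl(I^{\frac{\beta(1-\rho)}{\alpha}-1}\bigr)$ and using $\e\bigl(e^{\beta\rho\xi_1},1<\zeta\bigr)\le 1$). Without this undershoot/renewal-measure computation your argument at best identifies the shape of the jumping-in measure, not the lemma as stated.
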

Before proving this lemma let us observe that the above construction has as a consequence that we can actually embed, via a time change, all the recurrent extensions into a single one, namely the one corresponding to the ssel with largest self-similarity index. In the case where Cram\'er's condition is satisfied, all the recurrent extensions that leave zero by a jump can be embedded into the one that leaves zero-continuously. It will be seen more clearly in the proof of the latter Lemma that this embedding arises by deleting the beginning part of the excursions, and the deleted proportion length of paths increases as the self-similarity index of the entrance law decreases. Another interpretation of the identity (\ref{entrancelaws}) is given in terms of the meander process. The meander process of length $r$ is defined as the path of the excursion process in $(0,r],$ conditioned to live for a period of time of length at least $r$. So the law of the meander at time $1$ of $X^{(\beta)}$ is $$N^{(\beta)}(X_{1}\in dy | 1<T_{0}).$$ Similarly, that of $Y^{+,q}\stackrel{\text{Law}}{=}X^{(\beta\rho)},$ is given by 
$$N^{(\beta\rho)}(X_{1}\in dy | 1<T_{0}),$$  with $\rho$ as in the previous Lemma. According to the identity in (\ref{entrancelaws}) the above are related by the formula
\begin{equation}
\begin{split}
N^{(\beta\rho)}(X_{1}\in dy | 1<T_{0})&=N^{(\beta)}\left(\frac{X_{1}}{\mathcal{B}^{1/\alpha}_{\frac{\beta\rho}{\alpha}, \frac{\beta(1-\rho)}{\alpha}}}\in dy | 1<T_{0}\right)\\
&=N^{(\beta)}\left(X_{\frac{1}{\mathcal{B}_{\frac{\beta\rho}{\alpha}, \frac{\beta(1-\rho)}{\alpha}}}}\in dy \Big| \frac{1}{\mathcal{B}_{\frac{\beta\rho}{\alpha}, \frac{\beta(1-\rho)}{\alpha}}}<T_{0}\right),
\end{split}
\end{equation}
where the final identity is a consequence of the self-similarity property of the excursion measure $N^{(\beta)},$ which is in turn inherited from that of $X$ and the one of the ssel. Said otherwise, the position at time $1$ of a generic excursion from zero for the process $X^{(\beta\rho)}$ corresponds to a position at an independent random time $1/{\mathcal{B}_{\frac{\beta\rho}{\alpha}, \frac{\beta(1-\rho)}{\alpha}}}$ of a generic excursion from zero of the process $X^{(\beta)}.$ This observed delay is a consequence of the time change.   

To prove the Lemma~\ref{lemma:1const} we observe that  the process $Y^{(+,q)}$ is a pssMp, as it can be easily verified using standard arguments. We should prove that the measure $N^{\beta,+,q}$ of the
excursions from $0$ of $Y^{(+,q)},$ is such that $N^{\beta,+,q}(Y_0\in
{d}y)=c_{\alpha,\rho\beta}\eta_{\rho\beta}({d}y).$ This will be a consequence of the following auxiliary Lemma.
\begin{lemma}\label{stable}\begin{enumerate}
\item[(i)]The processes $Z^+, Z^{-,q}$ defined by
$$Z^{+}\equiv (Z^{+}_t=A^{+}_{L^{-1}_t}, t \geq 0);\qquad Z^{-,q}\equiv (Z^{-,q}_t=A^{-,q}_{L^{-1}_t}, t \geq 0) $$ are independent stable
subordinators of parameter $\beta/\alpha,$ and their respective
L\'evy measures are given by
$\pi^+({d}x)=\frac{c}{2}x^{-1-\beta/\alpha}{d}x,$ and
$\pi^{-,q}({d}x)=\frac{qc}{2}x^{-1-\beta/\alpha}{d}x,$ on
$]0,\infty[,$ and $c\in]0,\infty[$ is a constant.
\item[(ii)]The process $Z=Z^+-Z^{-,q}$ is a stable process with
parameter $\beta/\alpha$, positivity parameter
$\rho=\pr(Z_1>0),$
 with $\rho$ as defined in Lemma~\ref{lemma:1const},
 and L\'evy measure $$\Pi_{Z}(dx)=\pi^{+}(dx)1_{\{x>0\}}+\pi^{-,q}(-dx)1_{\{x<0\}}.$$ 
 \item[(iii)] The upward and downward ladder height processes, $H$
and $\widehat{H},$ associated to $Z$ are stable subordinators of
parameter $\beta\rho/\alpha$ and $\frac{\beta(1-\rho)}{\alpha},$
respectively.
\end{enumerate}
\end{lemma}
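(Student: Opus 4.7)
The plan is to combine Poisson thinning with classical fluctuation theory for stable processes. The inverse local time $\sigma_{t}=L^{-1}_{t}$ indexes the atoms of the marked Poisson point process $(\Delta_{s},U_{s})_{s>0}$ with characteristic measure $\widetilde N^{(\beta)}=N^{(\beta)}\otimes(\tfrac12\delta_{1}+\tfrac12\delta_{-1})$, and since $A^{+}$ (resp.\ $A^{-,q}$) increases only during red (resp.\ blue) excursions, one has, for every $t>0$,
\[
Z^{+}_{t}=\sum_{s\leq t}T_{0}(\Delta_{s})\ind_{\{U_{s}=+1\}},\qquad Z^{-,q}_{t}=q^{\alpha/\beta}\sum_{s\leq t}T_{0}(\Delta_{s})\ind_{\{U_{s}=-1\}}.
\]
Thinning along the independent symmetric Bernoulli marks decomposes the underlying point process into two independent Poisson point processes, so $Z^{+}$ and $Z^{-,q}$ are independent subordinators. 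By the $\beta$-self-similarity of the entrance law $(N^{(\beta)}_{s},s>0)$, the tail $N^{(\beta)}(T_{0}>s)=N^{(\beta)}_{s}\ind$ equals $c\,s^{-\beta/\alpha}$ for some $c>0$, so the exponential formula gives $\pi^{+}(\mathrm ds)\propto s^{-1-\beta/\alpha}\,\mathrm ds$. The deterministic dilation $s\mapsto q^{\alpha/\beta}s$ of the jumps of the analogous unweighted blue subordinator multiplies its Lévy density by $q$, which produces $\pi^{-,q}$ as stated. This settles (i).

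Part (ii) is the classical observation that a difference of two independent stable subordinators of common index $\alpha':=\beta/\alpha\in(0,1)$ is a stable Lévy process of index $\alpha'$ whose two-sided Lévy measure is $\pi^{+}$ on $(0,\infty)$ together with the reflection of $\pi^{-,q}$ on $(-\infty,0)$. The positivity parameter is then computed by Zolotarev's formula for asymmetric stable laws: with $c_{+}=c/2$ and $c_{-}=cq/2$ one has $(c_{+}-c_{-})/(c_{+}+c_{-})=(1-q)/(1+q)$, so
\[
\rho=\pr(Z_{1}>0)=\frac{1}{2}+\frac{1}{\pi\alpha'}\arctan\!\left(\tfrac{c_{+}-c_{-}}{c_{+}+c_{-}}\tan(\pi\alpha'/2)\right),
\]
which coincides with the expression in Lemma~\ref{lemma:1const}, and $\rho\in(0,1)$ since $c_{-}>0$ and $\beta<\alpha$.

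Finally, (iii) follows from the classical fluctuation-theoretic fact that, for any $\alpha'$-stable Lévy process with positivity parameter $\rho\in(0,1)$, the ascending and descending ladder height processes are stable subordinators of indices $\alpha'\rho$ and $\alpha'(1-\rho)$, respectively (see e.g.\ Bertoin's book, Chapter VIII). Substituting $\alpha'=\beta/\alpha$ yields the claimed indices $\beta\rho/\alpha$ and $\beta(1-\rho)/\alpha$. The only mildly delicate point in the whole argument is verifying that the prefactor $q^{\alpha/\beta}$ in the definition of $A^{-,q}$ really acts on the Lévy measure of the inverse-local-time-evaluated blue subordinator as a scaling of the jumps; once this is recognized, (i)--(iii) reduce to standard excursion-theoretic and fluctuation-theoretic tools.
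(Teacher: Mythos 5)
Your proposal is correct and follows essentially the same route as the paper: independence of $Z^{+},Z^{-,q}$ from the Poisson structure of the marked excursion process, the $t^{-1-\beta/\alpha}$ tail of $N^{(\beta)}(T_{0}\in dt)$ from the self-similarity of the entrance law, the factor $q$ from the dilation $q^{\alpha/\beta}$ (the paper does the same computation via the exponential formula for the Laplace exponent), and parts (ii)--(iii) from classical stable fluctuation theory. Your explicit use of Zolotarev's formula to identify $\rho$ with the expression in Lemma~\ref{lemma:1const} merely spells out what the paper dismisses as ``straightforward.''
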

\begin{proof}
Because the stable subordinator $L^{-1}$ has the L\'evy measure $N^{(\beta)}(T_0\in{d}t),$ it follows that
$N^{(\beta)}(T_0\in{d}t)=-d\eta^{(\beta)}_{t}1=ct^{-1-\beta/\alpha}{d}t,$ $t>0,$
for some constant $0<c<\infty.$ That the processes $Z^+,Z^{-,q},$ are independent subordinators is a
standard result in the theory of excursions of Markov processes and
follows from the fact that they are defined in terms of the atoms of the Poisson point process $((\Delta_{s}, U_{s}), s>0)$ whose mark equals $1$ and $-1$, respectively. The
self-similarity property for $(Z^+,Z^{-,q})$ follows from that of
$(\widetilde{X}^{(\beta)}, \widetilde{\p}^{(\beta)}).$ The jump measure $\pi^+$ of $Z^+$ is given
by
$$\pi^+({d}t)=\frac{1}{2}N^{(\beta)}(T_0\in {d}t),$$ while that of
$Z^{-,q}$ is $$\pi^{-,q}({d}t)=\frac{q}{2}N^{(\beta)}(T_0\in {d}t).$$ This is a consequence of  the following calculations based on the exponential formula for Poisson point processes:
\begin{equation*}
\begin{split}
\widetilde{\e}^{(\beta)}\left(\exp\{-\lambda Z^{-,q}_{1}\}\right)&=\exp\left\{-\int_{(0,\infty)}(1-e^{-\lambda q^{\alpha/\beta}t}) \widetilde{N}^{(\beta)}(T_{0}\in dt, U=-1)\right\}\\
&=\exp\left\{-\int_{(0,\infty)}(1-e^{-\lambda q^{\alpha/\beta}t}) \frac{1}{2}{N}^{(\beta)}(T_{0}\in dt)\right\}\\
&=\exp\left\{-\int_{(0,\infty)}(1-e^{-\lambda q^{\alpha/\beta}t}) \frac{c}{2}\frac{dt}{t^{1+\frac{\beta}{\alpha}}}\right\}\\
&=\exp\left\{-\int_{(0,\infty)}(1-e^{-\lambda s}) \frac{q}{2}N^{(\beta)}(T_{0}\in ds)\right\},
\end{split}
\end{equation*}
for all $\lambda \geq 0.$
The proof of the assertion in (ii)
is straightforward. It is well known in the fluctuation theory of
L\'evy processes that the upward and downward ladder height
subordinators associated to a stable L\'evy process have the form
claimed in (iii). See for instance \cite{bertoinbook} Chapter VIII.
\end{proof}

\begin{proof}[Proof of Lemma~\ref{lemma:1const}]
By construction, the closure of the set of times at which the process
$Y^{(+,q)}$ visits $0$ is the regenerative set given by the closure
of the image of the supremum of the stable L\'evy process $Z.$ This coincides with the image of the upward ladder height subordinator $H,$ which is a $\beta\rho/\alpha$-stable subordinator. This set is an unbounded perfect regenerative set with zero Lebesgue measure, see \cite{bertoin-subordinators} Chapter 2. It follows that $0$ is a regular and recurrent state for $Y^{(+,q)}.$ The length of any excursion out of $0$ for $Y^{(+,q)}$ is distributed as a jump of $Z$ to reach a new supremum or, equivalently, as a jump of the upward ladder height process $H$ associated to $Z.$ Let $\overline{N}$ denotes the
measure of the excursions from $0$ of $\overline{Z}-Z,$ the process
$Z$ reflected at its current supremum, viz.
$$(\overline{Z}-Z)_t:=\sup_{0\leq s\leq t}\left\{0\vee Z_s\right\} - Z_t,\qquad t\geq 0;$$ and let $R$ denote the lifetime of the generic
excursion from $0$ of $\overline{Z}-Z$. Let $\Pi_Z$ be the L\'evy
measure of $Z$ and $\widehat{V}$ be the renewal measure of the
downward ladder height subordinator $\widehat{H},$ that is
$$\widehat{V}({d}y)=\er\left(\int^{\infty}_{0}1_{\{\widehat{H}_s\in{d}y\}}{d}s\right).$$ It is known in the fluctuation theory for L\'evy processes that
under $\overline{N}$ the joint law of $Z_{R-}$ and $Z_{R}-Z_{R-}$ is given by
$$\overline{N}(Z_{R-}\in {d}x, -(Z_{R}-Z_{R-})\in{d}y)=
\widehat{V}({d}x)\Pi_{Z}({d}y)1_{\{0<x<y\}},$$ see for instance \cite{kyprianoubook}, Chapter VII. Moreover, the L\'evy measure of $H,$ say $\Pi_{H}({d}x),$ is such
that
$$\Pi_{H}]x,\infty[=\int\int_{\{0\leq s\leq u\}}\widehat{V}({d}s)\Pi_Z({d}u)1_{]x,\infty[}(u-s),\qquad x>0.$$
cf. \cite{thesevigon}. In our framework, $-(Z_{R}-Z_{R-})$ denotes
the length of the generic positive excursion from $0$ for
$(Y,\widetilde{\p}^{(\beta)})$ and $Z_{R-}$ is the length of the portion
of the generic positive excursion from $0$ of $(Y,\widetilde{\p}^{(\beta)})$
that is not observed while observing a generic excursion from $0$ of
$Y^{(+,q)}.$ See Figure~1. Furthermore, $-(Z_{R}-Z_{R-})-Z_{R-}$ is the length of
the generic excursion from $0$ for $Y^{(+,q)},$ so
$N^{\beta,+,q}(T_0\in{d}t)=\Pi_{H}({d}t).$

\begin{figure}
%\begin{minipage}[b]{0.5\linewidth}
\centering
\includegraphics[width=8cm]{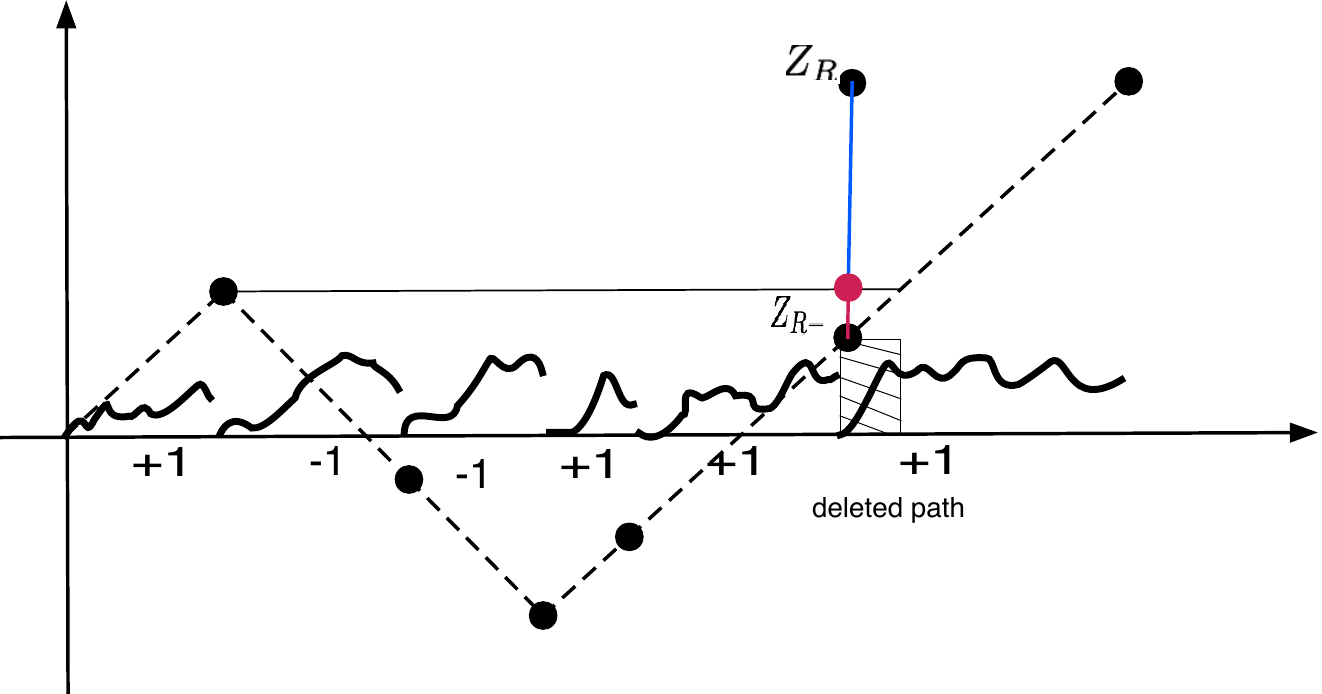}\label{fig1}
\caption{A schematisation of the path of the processes $\widetilde{X}^{(\beta)},$ and $A^{+}-A^{-,q}$. The dots show the jumps of the process $Z.$ The shaded area represents the portion of path that is deleted with the time change. }
%\end{minipage}
\end{figure}

Let $\widetilde{N}^{(\beta)}( \cdot|T_0=\cdot)$ denote a version of the regular
conditional law of the generic excursion under $\widetilde{N}^{(\beta)}$ given the
lifetime $T_0.$ Similarly, the notation
$N^{\beta,+,q}(\cdot|T_0=\cdot)$ will be used for the analogous
conditional law under $N^{\beta,+,q}.$ These laws can be constructed
using the method in~\cite{chaumont97}, see also \cite{rivero-cramer}.

Finally, it follows from the verbal description above, that for any
positive and measurable function $f:\re\to\re^+$
\begin{equation*}
\begin{split}
N^{\beta,+,q}(f(Y_0))&=\int^{\infty}_{0}N^{\beta,+,q}(T_0\in{d}u)N^{\beta,+,q}\left(f(Y_0)| T_0=u\right)\\
&=\int_{t\in]0,\infty[}\int_{s\in]t,\infty[}\overline{N}(Z_{R-}\in {d}t, -(Z_{R}-Z_{R-})\in{d}s)\widetilde{N}^{(\beta)}(f(Y_{t})|T_0 = s, U=1)\\
&=\frac{1}{2}\int_{t\in]0,\infty[}\widehat{V}({d}t)\int_{s\in]t,\infty[}\Pi_Z({d}s)\widetilde{N}^{(\beta)}(f(Y_{t}) |T_0 = s, U=1)\\
&=\int_{t\in]0,\infty[}\widehat{V}({d}t)\int_{s\in]t,\infty[}\widetilde{N}^{(\beta)}(T_0\in{d}s, U=1)\widetilde{N}^{(\beta)}(f(Y_{t}) |T_0 = s, U=1)\\
&=\int_{t\in]0,\infty[}\widehat{V}({d}t)\widetilde{N}^{(\beta)}(f(Y_{t}), t < T_0, U=1).\\
\end{split}
\end{equation*}
Given that the downward ladder height subordinator $\widehat{H}$ is
a stable process with index $\frac{\beta(1-\rho)}{\alpha},$ it follows
that
\begin{equation*}
\begin{split}
N^{\beta,+,q}(f(Y_0))&=\frac{\beta(1-\rho)}{\alpha}\widehat{k}\int_{t\in]0,\infty[}{d}t  t^{\frac{\beta(1-\rho)}{\alpha}-1}\widetilde{N}^{(\beta)}(f(Y_{t}), t < T_0, U=1)\\
&=\frac{\beta(1-\rho)}{\alpha}\frac{\widehat{k}}{2}\int_{t\in]0,\infty[}{d}t  t^{\frac{\beta(1-\rho)}{\alpha}-1}t^{-\beta/\alpha}N^{(\beta)}(f(t^{1/\alpha} Y_{1}), 1 < T_0 )\\
&=\frac{\beta(1-\rho)}{\alpha}\frac{\widehat{k}}{2}\int_{]0,\infty[}N^{(\beta)}(Y_1\in{d}x, 1<T_0)\int_{t\in]0,\infty[}{d}t  t^{-\frac{\beta\rho}{\alpha}-1}f(t^{1/\alpha} x)\\
&=\beta(1-\rho)\frac{\widehat{k}}{2}N^{(\beta)}\left(Y^{\beta\rho}_1, 1<T_0\right)\int_{u\in]0,\infty[}{d}u  u^{-\beta\rho-1}f(u)\\
&=c_{\alpha,\rho\vartheta}\eta_{\rho}f,
\end{split}
\end{equation*}
where $\widehat{k}$ is a constant that depends on the normalization
of the local time at zero for the reflected process
$\overline{Z}-Z;$ which without loss of generality can, and is
supposed to be  $\widehat{k}=1.$ The finiteness of $N^{(\beta)}\left(Y^{\beta\rho}_1, 1<T_0\right)$ follows from Theorem~\ref{thm:1} because we have $$N^{(\beta)}\left(Y^{\beta\rho}_1, 1<T_0\right)=\widehat{\e}^{(\beta)}\left(I^{-\frac{\beta\rho}{\alpha}}I^{\frac{\beta}{\alpha}-1}\right)=\widehat{\e}^{(\beta)}\left(I^{\frac{\beta(1-\rho)}{\alpha}-1}\right),$$ which happens to be finite because $$\widehat{\e}^{(\beta)}\left(\exp\left\{\frac{\beta(1-\rho)}{\alpha}(\alpha\xi_{1})\right\},1<\zeta\right)={\e}\left(\exp\left\{\beta\rho\xi_{1}\right\},1<\zeta\right)\leq 1.$$  \end{proof}
\begin{remark}The previous proof is inspired in~\cite{rogers-williams}.\end{remark}

\section{Two extensions}\label{extensions}
In this section we state without proof two results that can be obtained with essentially the same proof as that of Theorems~\ref{thm:1} and \ref{thm:20}.

\subsection{A result in the fluctuation theory of self-similar Markov processes }\label{fluctuation}
This subsection is motivated by the work \cite{ckpr}. Let $(Z,h)$ be a bivariate L\'evy process such that its coordinates are subordinators. Denote by $\Pi_{Z},$ $\Pi_{h}$ the L\'evy measures of $Z$ and $h,$ respectively. We will use the following notation for tail L\'evy measures $$\overline{\Pi}_{Z}(x)=\Pi_{Z}(x,\infty),\quad \overline{\Pi}_{h}(x)=\Pi_{h}(x,\infty),\quad A_{h}(x)=\max\{1,\overline{\Pi}_{h}(1)\}+\int^x_{1}\overline{\Pi}_{h}(z){d}z,\qquad x>0.$$   For $\alpha>{0}$, let $\tau_{h}$ be the time change defined as $$\tau_{h}(t)=\inf\{s>0 : \int^s_{0}e^{\alpha h_{s}}ds>t\},\qquad t>0.$$ Define a stochastic process $(V_{t}, t\geq 0)$ pathwise as the Stieljets integral of $e^{h_{s}}$ with respect to $Z$, $$V_{t}=\int^t_{0}e^{\alpha h_{s-}}{d}Z_{s},\qquad t\geq 0.$$ For $a\geq 0,$ $x>0$  we denote by $\mathbf{Q}_{a,x}$ the law of the processs  $(R,H)$  defined as follows 
$$R_{t}=a+x^{\alpha}\int^{\tau_{h}(tx^{-\alpha})}_{0}e^{\alpha h_{s-}}{d}Z_{s},\quad  H_{t}=xe^{h_{\tau_{h}(tx^{-\alpha})}},\qquad  t\geq 0.$$ 

\begin{teo}\label{thm:2}
Under $(\mathbf{Q}_{a,x}, a\geq0, x>0)$ $(R,H)$ is a Feller process in $[0,\infty)\times(0,\infty).$  Assume that \begin{equation}\label{LMcond}
\int^\infty_{e}\frac{\log(y)}{A_{h}(\log(y))}\Pi_{Z}({d}y)<\infty.\end{equation}
 The random variable $\widetilde{I}:=\int^\infty_{0}e^{-\alpha h_{s}}{d} Z_{s-}$ is finite a.s. The family of measures $(\widetilde{\mu}_{t}, t\geq 0)$ defined by $$\widetilde{\mu}_{t}f=\er\left(f\left(\frac{t \widetilde{I}}{I_{h}}, \left(\frac{t}{I_{h}}\right)^{1/\alpha}\right)\frac{1}{I_{h}}\right),$$ form an entrance law for $\mathbf{Q}_{\cdot,\cdot},$ where $I_{h}:=\int^\infty_{0}e^{-\alpha h_{s}}{d}s.$
\end{teo}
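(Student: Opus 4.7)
The proof naturally splits into three parts. First, the Feller property of $(R,H)$ on $[0,\infty)\times(0,\infty)$ follows directly from the Lamperti-type representation: the time change $\tau_{h}(tx^{-\alpha})$ is a.s.\ continuous in $x$ on the interior of its domain, the Stieltjes integral $\int_{0}^{\cdot}e^{\alpha h_{s-}}dZ_{s}$ depends continuously on the paths of $(Z,h)$, and $(Z,h)$ has no fixed discontinuities, so joint right-continuity of the semigroup and continuity in the initial data follow. The strong Markov property of $(R,H)$ is inherited from that of $(Z,h)$ through the Markov property of $\tau_{h}$, and the decay at infinity of $P_{t}f$ for $f\in C_{0}$ is straightforward from $R_{t}\geq a$ and $H_{t}=xe^{h_{\tau_{h}(tx^{-\alpha})}}\to\infty$ as $x\to\infty$.

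Second, the almost-sure finiteness of $\widetilde{I}=\int_{0}^{\infty}e^{-\alpha h_{s}}\,dZ_{s-}$ under~(\ref{LMcond}) is exactly the Erickson--Maller / Lindner--Maller integrability criterion for generalised stochastic exponential integrals driven by a bivariate Lévy process whose exponent-bearing coordinate drifts to $+\infty$ --- automatic here because $h$ is a subordinator. The hypothesis~(\ref{LMcond}) is a weighted logarithmic-moment condition on the big jumps of $Z$ against the slow-growth function $A_{h}$ encoding the asymptotics of $h$; the parallel fact $I_{h}=\int_{0}^{\infty}e^{-\alpha h_{s}}ds<\infty$ is immediate since $h_{s}/s$ tends to a strictly positive limit.

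The core of the proof is the construction and identification of the entrance law, and here I would mirror verbatim the Kuznetsov-process / Kaspi time-change argument of Section~\ref{sect:2}. Let $(W=(W^{Z},W^{h}),\mathbb{Q})$ be the Kuznetsov process associated with the bivariate Lévy process $(Z,h)$ and the $\sigma$-finite invariant measure $dz\otimes dy$ on $\re^{2}$ (the analogue of $\Lambda^{0}$ in the paper). As in Section~\ref{sect:2} one verifies that $\mathbb{Q}$ is translation invariant, $W^{h}_{t}$ has Lebesgue marginal under $\mathbb{Q}$, the birth time satisfies $\widetilde\alpha=-\infty$ $\mathbb{Q}$-a.s., and time reversal at a finite time produces a process with the semigroup of $(-Z,-h)$. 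Setting
$$\rho(t)=\int_{\widetilde\alpha}^{t}e^{\alpha W^{h}_{s}}ds,\qquad C_{t}=\inf\{s:\rho(s)>t\},$$
the finiteness $C_{t}<\infty$ $\mathbb{Q}$-a.s.\ follows, as in the one-dimensional argument, from the fact that the time-reversed $W^{h}$ drifts to $-\infty$, forcing $\rho(0)<\infty$. Kaspi's theory~\cite{Kaspi} then guarantees that
$$\widetilde\eta_{t}F:=\mathbb{Q}\left(F\Bigl(\int_{\widetilde\alpha}^{C_{t}}e^{\alpha W^{h}_{s-}}dW^{Z}_{s},\,e^{W^{h}_{C_{t}}}\Bigr);\,0<C_{t}<1\right)$$
is an entrance law for $\mathbf{Q}_{\cdot,\cdot}$.

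The main obstacle is matching $\widetilde\eta_{t}$ with the explicit $\widetilde\mu_{t}$ of the statement. Following the derivation leading to~(\ref{qpotEL}), one computes $\int_{0}^{\infty}e^{-qt}\widetilde\eta_{t}F\,dt$, applies Fubini, inverts the time change via $s=C_{t}$, conditions on $\sigma(W_{u}:u\geq s)$, and uses Mitro's reversal formula (Proposition~4.7 in~\cite{mitro1979}). The new bivariate ingredient is that the reversal identifies the conditional law of the pair
$$\Bigl(\int_{\widetilde\alpha}^{s}e^{\alpha W^{h}_{u-}}dW^{Z}_{u},\;\int_{\widetilde\alpha}^{s}e^{\alpha W^{h}_{u}}du\Bigr)\ \text{given}\ W^{h}_{s}=y,$$
with the pair $(e^{\alpha y}\widetilde{I},\,e^{\alpha y}I_{h})$ computed from an independent copy of $(Z,h)$ under $\er$; this requires checking that the bivariate Stieltjes integral transforms correctly under the path reversal, which is where the coupling between $Z$ and $h$ enters. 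The change of variable $t=e^{\alpha y}I_{h}$, integrating $y$ against Lebesgue, then produces
$$\int_{0}^{\infty}dt\,e^{-qt}\,\er\!\left[F\Bigl(\frac{t\widetilde{I}}{I_{h}},\Bigl(\frac{t}{I_{h}}\Bigr)^{1/\alpha}\Bigr)\frac{1}{I_{h}}\right],$$
and uniqueness of Laplace transforms gives $\widetilde\eta_{t}=\widetilde\mu_{t}$ for a.e.\ $t>0$; weak continuity in $t$ of $\widetilde\mu_{t}$ (from dominated convergence in its defining formula) extends the equality to all $t>0$, completing the proof.
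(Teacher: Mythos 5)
Your outline coincides with the route the paper itself indicates for this result (which it states without detailed proof): the Feller property via Lamperti-type arguments as in \cite{ckpr}, finiteness of $\widetilde{I}$ from the Lindner--Maller criterion, and the entrance law built from the Kuznetsov process of the bivariate L\'evy process $(Z,h)$ with planar Lebesgue measure, translation invariance, Kaspi's time change, Mitro's reversal formula and identification of Laplace transforms, exactly mirroring the proof of Theorems~\ref{thm:1} and \ref{thm:20}. This is essentially the same approach, carried out at a level of detail comparable to the paper's own sketch.
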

The condition (\ref{LMcond}) has been introduced by Linder and Maller~\cite{lindner+maller}, and it is a necessary and sufficient  for the L\'evy integral, $\int^\infty_{0}e^{-\alpha h_{s-}}{d} Z_{s},$ to be finite. Their results hold not only for subordinators but for any L\'evy process. It is important to mention that if $\er(h_{1})<\infty$ then the condition (\ref{LMcond}) is equivalent to $\er(\log^+(Z_{1}))<\infty$ which is a well known equivalent condition for the convergence a.s. of the integral $\int^\infty_{0}e^{-s}{d}Z_{s},$ see e.g. \cite{Satobook}. It is important to mention that it is possible to establish a result similar to Theorem~\ref{thm:2} for more general couples of L\'evy processes but, as we do not have any application in mind, we wont pursue this line of research. 

In the work \cite{ckpr} the process $h$ is the upward ladder height associated to a L\'evy process $\xi$ and $Z$ is another subordinator constructed as functionals of the excursions of $\xi$ reflected in its past supremum. This processes are key elements to develop  a fluctuation theory for pssMp. We refer to \cite{ckpr} for further details.

That the process $(R,H)$ is a Feller process in $[0,\infty)\times(0,\infty),$ is obtained using arguments similar to those in \cite{ckpr}. The rest of the proof follows along the same lines of Theorem~\ref{thm:1}; mainly the roll played by the L\'evy process $\xi$ is played by the bivariate L\'evy process $(Z,h)$. It is known that Lebesgue measure $\Lambda_{2}$ in $\re^2$ is an invariant measure for any bivariate L\'evy process, so for $(Z,h).$ We take $(Y=(Y_{1},Y_{2}),\mathbb{Q})$ the Kuznetzov process associated to $(Z,h)$ and $\Lambda_{2}.$  This process has the Markov property under $\mathbb{Q}$ and is invariant under translations. The birth and death time of $Y$ under $\mathbb{Q}$ are infinite because the Lebesgue measure is invariant for $(Z,h)$ and $(\widehat{Z},\widehat{h})=-(Z,h).$  The rest of the proof is essentially the same as that of Theorem~\ref{thm:1}

\subsection{Multi-self-similar Markov processes}\label{multi}
The following definition steams from the work of Jacobsen and Yor~\cite{JACY}. 
\begin{defi}[Jacobsen and Yor~\cite{JACY}]
A $n$-dimensional Markov process $X$ with state space $\re^{n}_{+}=[0,\infty)^{n}$ is $1/{\alphab}$-multi-self-similar, with $\alphab=(\alpha_{1},\ldots,\alpha_{n})\in \re^{n},$ if for all scaling factors $c_{1},\ldots, c_{n}>0,$ and all initial states $x=(x_{1},\ldots, x_{n})\in\re^{n}_{+}$ it holds that 
\begin{equation}\label{MSSMP}
\left(\left(\left\{c_{i}X^{i}_{t/c}\right\}_{i\in(1,\ldots,n)}, t\geq 0\right), \p_{(x_{1},\ldots, x_{n})}\right)\stackrel{\text{Law}}{=}\left(\left(\left\{X^{i}_{t}\right\}_{i\in(1,\ldots,n)}, t\geq 0\right), \p_{(c_{1}x_{1},\ldots, c_{n}x_{n})}\right),
\end{equation}
where $c=\prod^{n}_{i=1}c^{\alpha_{i}}_{i}.$ We denote by $T_{\mathbf 0}=\inf\{t>0: X^{i}_{t}=0,\ \text{for some } i\in\{1,\ldots,n\} \}.$
\end{defi}
Note that for $n>1,$ the symbol $1/{\alphab}$ is senseless, but we made the choice of using it to preserve the customary notation for the $1$-dimensional case. Of course, in the case $n=1,$ $1/0$ neither makes sense but observe that in the definition (\ref{MSSMP}) this is does not cause any inconvenient. 
 
Examples of multi-self-similar diffusions where introduced by Jacobsen~\cite{jacobsen} and Jacobsen and Yor \cite{JACY}. Examples of processes with jumps can be easily obtained from these processes by subordination via independent stable subordinators.

Extending the arguments of Lamperti~\cite{Lamperti}, Jacobsen and Yor~\cite{JACY} established that there exists a one to one correspondence between multi--self--similar Markov
processes on $\re^{n}_{+}$ and L\'evy processes taking values in $\re^{n}\cup\{\Delta\},$ that we next sketch. For that end we start by setting some notation. 

For ${\alphab}\in \re^{n}$ fixed, we will denote $\mathit{p}_{\alphab}(u)=\prod^{n}_{i=1}u^{\alpha_{i}}_{i},$ for all $u=(u_{1},\ldots, u_{n})\in (0,\infty)^{n}.$ For $u,v\in [0,\infty)^{n}$ we denote by $u\circ v$ the Hadamard product of $u$ and $v,$ $u\circ v=(u_{1}v_{1},\cdots, u_{n}v_{n}).$ For a vector $z=(z_{1},\ldots,z_{n})\in \re^{n}\cup\{\Delta\}$ we will denote by $\mathscr{E}(z)=\left(\exp\{z_{1}\},\ldots, \exp\{z_{n}\}\right),$ if $z\in\re^{n}$ and $\mathscr{E}(\Delta)\in{\mathbf 0}:=\{u\in[0,\infty)^{n} :\prod^{n}_{i=1}u_{i}=0\}.$ Assume now that $(\mathbb D,\mathcal D)$ is the space of c\`adl\`ag paths
$\omega: [0,\infty[ \to \re^{n}\cup\{\Delta\},$ endowed with the
$\sigma$--algebra generated by the coordinate maps and the completed natural
filtration $(\mathcal D_t, t\geq 0).$ Let $\p$ be a probability
measure on $\mathcal D$ such that under $\p$ the
process $\xi$ is a L\'evy process that takes values in $\re^{n}\cup\{\Delta\}.$ Where the state $\Delta$ is understood as a cemetery state, and so the first hitting time of $\Delta$ or life time for $\xi,$ say $\zeta,$ follows an exponential distribution with some parameter ${q}\geq 0,$ and the value $q=0,$ is permitted to include the case where $\zeta=\infty,$ $\p$-a.s.
 
For ${\alphab}\in \re^{n},$ set for $t\geq 0$
$$\tau(t)=\inf\{s>0, \int^s_0 e^{<\alphab,\xi_r>}dr > t \},$$
with the usual convention that $\inf\{\emptyset\}=\infty.$  For $x\in [0,\infty)^{n},$ let $(X^{x}_{t}, t\geq 0)$ be the process defined by {\it Lamperti, Jacobsen and Yor  transformation}:
\begin{equation}\label{eq:1}
X^{(x)}_{t}:=\begin{cases}x\circ\mathscr{E}\left(\xi_{\tau(t/\mathit{p}_{\alphab}(x))}\right), & \text{if} \ t<\mathit{p}_{\alphab}(x)\int^{\zeta}_{0}\exp\{<\alphab, \xi_{s}>\}ds\\
\overline{0} &  \text{if} \ t\geq \mathit{p}_{\alphab}(x)\int^{\zeta}_{0}\exp\{<\alphab, \xi_{s}>\}ds\ \text{or} \ x\in{\mathbf 0}.
\end{cases}
\end{equation} for $t\geq 0.$ For $x\in [0,\infty)^{n},$ we denote by $\p_{x}$ the law of $X^{(x)},$ that is the image measure of $\p$ under Lamperti's transformation applied to the process $\xi$.  It is a standard fact that this process is adapted with respect to the filtration $\mathcal{F}_{t}=\mathcal{D}_{\tau(t)},$ $t\geq 0,$ and inherits the strong Markov property from $\xi,$ with respect to the filtration $(\mathcal{F}_{t}, t\geq 0),$ see for instance {\cite{Rogersandwilliams}}. A straight forward verification shows that the Markov family $(X,\p_{x})_{x\in[0,\infty)^{n}}$ bears the $1/\alphab$-multi-self-similar property. Jacobsen and Yor proved that any multi-self-similar Markov process that never hits the set ${\mathbf 0},$ can be constructed this way. A perusal of their proof and that of Lamperti for the case of dimension $1,$ shows that the result can be easily extended to any multi-self-similar Markov process killed at its first hitting time of ${\mathbf 0}.$ We do not include the details. We have the following result that extends the Theorem~\ref{thm:20} for the class of multi-self-similar Markov processes.

\begin{teo}
Let $\alphab=(\alpha_{1},\ldots, \alpha_{n})\in\re^{n}$, $X=\left((X_{t})_{t\geq 0},(\p_{x})_{x\in \re^{+}_{n}}\right)$ be a $1/\alphab$-multi-self-similar Markov process  and $\xi$ the $\re^{n}$-valued L\'evy process associated to it via the Lamperti-Jacobsen-Yor transformation.  Assume that $\xi$ has an infinite lifetime and $\lim_{t\to \infty}<\alpha,\xi_{t}>=\infty.$ There exists a unique entrance law $(\mu_{t}, t\geq 0)$ for $X$ whose $\lambda$-potential is given by 
\begin{equation*}
\begin{split}
&\int^{\infty}_{0}dt e^{-\lambda t}\mu_{t}f\\
&=\int_{(0,\infty)^{n}}m(dx_{1}\cdots dx_{n})f(x_{1},\cdots,x_{n}){\e}\left(\exp\{-\lambda p_{\alphab}\left((x_{1},\ldots, x_{n})\right)\int^{\infty}_{0}\exp\{-<\alphab,\xi_{s}>\}ds\}\right),
\end{split}
\end{equation*}where $f:(0,\infty)^{n}\to[0,\infty)$ is a measurable function and $$m(dx_{1}\cdots dx_{n})=dx_{1}\cdots dx_{n}\prod^{n}_{i=1} (x_{i})^{\alpha_{i}-1},\qquad \text{on} \ (0,\infty)^{n}.$$
This entrance law has the scaling property: for $c_{1},\ldots, c_{n}>0,$ $c=\prod^{n}_{i=1}c^{\alpha_{i}}_{i}$ 
\begin{equation}\label{scalingmulti}
\mu_{t/c}\mathbf{H}_{(c_{1},\ldots,c_{n})}f=\mu_{t}f,\qquad t>0,\end{equation} 
with $$\mathbf{H}_{(c_{1},\ldots, c_{n})}f(x_{1},\ldots,x_{n})=f(c_{1}x_{1},\ldots, c_{n}x_{n}),\qquad (x_{1},\ldots, x_{n})\in (0,\infty)^{n}$$
\end{teo}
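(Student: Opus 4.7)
The plan is to adapt the Kuznetzov-process construction used for Theorems~\ref{thm:1} and \ref{thm:20} to the multi-dimensional setting. The key observation is that Lebesgue measure $\Lambda_{n}$ on $\re^{n}$ is invariant both for $(\xi,\p)$ and for its dual $(\widehat{\xi},\widehat{\p})=(-\xi,\widehat{\p})$, which plays the role that $\Lambda^{\theta}$ played in the $1$-dimensional case with $\theta=0$. Under the standing hypotheses, $\langle\alphab,\xi_{t}\rangle\to+\infty$ a.s., so in the dual $\langle\alphab,\widehat{\xi}_{s}\rangle\to-\infty$ a.s., and the exponential functional $\widetilde{I}_{\alphab}:=\int_{0}^{\infty}e^{-\langle\alphab,\xi_{s}\rangle}\,{d}s$ is finite a.s.

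First I would construct the Kuznetzov process $(Y,\mathbb{Q})$ associated to the semigroup of $(\xi,\p)$ and $\Lambda_{n}$; its finite dimensional marginals are as in the $1$-dimensional case but with $\re$ replaced by $\re^{n}$. A verbatim repetition of the translation computation shown for $\mathbb{Q}^{\theta}$ yields that $\mathbb{Q}$ is invariant under translations of the path by elements of $\re^{n}$. Since $\xi$ has infinite lifetime the death time $\widetilde{\beta}=+\infty$, $\mathbb{Q}$-a.s., and since $\Lambda_{n}$ is also invariant for the dual the birth time $\widetilde{\alpha}=-\infty$, $\mathbb{Q}$-a.s. Then introduce
$$\rho(t)=\int_{-\infty}^{t}e^{\langle\alphab,Y_{s}\rangle}\,{d}s,\qquad C_{t}=\inf\{s:\rho(s)>t\},$$
and apply Mitro's formula (Proposition~4.7 in \cite{mitro1979}) to conclude that $\rho(t)<\infty$ and $C_{t}<\infty$ $\mathbb{Q}$-a.s.; the finiteness of $\rho$ is inherited from the a.s.~finiteness of $\int_{0}^{\infty}e^{\langle\alphab,\widehat{\xi}_{s}\rangle}\,{d}s$ under the dual, while $\rho(+\infty)=+\infty$ follows from $\langle\alphab,\xi_{s}\rangle\to+\infty$ under $\p$. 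By Kaspi's time-change theory \cite{Kaspi}, the prescription
$$\mu_{t}f:=\mathbb{Q}\!\left(f(\mathscr{E}(Y_{C_{t}})),\,0<C_{t}<1\right),\qquad t>0,$$
defines an entrance law for the Lamperti-Jacobsen-Yor image semigroup, i.e.~for $X$.

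The scaling identity (\ref{scalingmulti}) drops out of the translation invariance of $\mathbb{Q}$: translating $Y$ by $(\log c_{1},\ldots,\log c_{n})$ multiplies $e^{\langle\alphab,Y_{s}\rangle}$ by $c=\prod_{i}c_{i}^{\alpha_{i}}$, hence it sends $C_{t}(Y)$ to $C_{t/c}(Y)$ and acts on $\mathscr{E}(Y)$ by componentwise multiplication by $(c_{1},\ldots,c_{n})$. To obtain the stated Laplace potential I would mimic the calculation leading to (\ref{qpotEL}): by Fubini and the substitution $t=\rho(s)$,
$$\int_{0}^{\infty}{d}t\,e^{-\lambda t}\mu_{t}f=\int_{0}^{1}{d}s\;\mathbb{Q}\!\left(e^{\langle\alphab,Y_{s}\rangle}e^{-\lambda\rho(s)}f(\mathscr{E}(Y_{s}))\right),$$
and then conditioning on $\sigma(Y_{u},u\geq s)$ and invoking Mitro's time-reversal identifies the inner conditional expectation with $\widehat{\e}_{Y_{s}}(\exp\{-\lambda\int_{0}^{\infty}e^{\langle\alphab,\widehat{\xi}_{u}\rangle}\,{d}u\})$. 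Writing $Y_{s}=y$, using that the one-dimensional marginal of $Y$ is $\Lambda_{n}$, and finally performing the change of variables $x_{i}=e^{y_{i}}$ converts $e^{\langle\alphab,y\rangle}\,{d}y$ into $m({d}x_{1}\cdots{d}x_{n})$ and produces the announced formula, since under $\widehat{\e}_{y}$ the exponential functional equals $e^{\langle\alphab,y\rangle}\widetilde{I}_{\alphab}=p_{\alphab}(x)\widetilde{I}_{\alphab}$. Uniqueness of the entrance law with the stated $\lambda$-potential is immediate from injectivity of the Laplace transform in $t$.

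The only genuinely delicate step is verifying that Kaspi's time-change theorem, together with Mitro's time-reversal formula, applies to the multi-dimensional Kuznetzov process built from the $\re^{n}$-valued L\'evy process $(\xi,\p)$ and the invariant measure $\Lambda_{n}$; I do not expect a real obstruction here, as these results are stated in the requisite generality in \cite{Kaspi} and \cite{mitro1979} and the only structural ingredients used from the state space are the existence of a nice semigroup in weak duality with its dual with respect to an excessive reference measure. Once this is in place, the rest of the argument is essentially mechanical, and the proof mirrors the one of Theorem~\ref{thm:20}, with $\re$ replaced by $\re^{n}$, $e^{x}$ by $\mathscr{E}(x)$, $e^{\alpha\xi_{s}}$ by $e^{\langle\alphab,\xi_{s}\rangle}$, and the dilation by the Hadamard action $\mathbf{H}_{(c_{1},\ldots,c_{n})}$.
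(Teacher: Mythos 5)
Your proposal follows exactly the route the paper indicates for this theorem: it sketches the proof by pointing to the argument of Theorems~1 and~2, namely taking Lebesgue measure on $\re^{n}$ as the invariant (duality) measure, building the associated Kuznetzov process, exploiting its translation invariance, time-changing via Kaspi's theory, and identifying the $\lambda$-potential through Mitro's time-reversal formula, which is precisely what you carry out. Your fleshed-out computation of the potential, the scaling identity via translation by $(\log c_{1},\ldots,\log c_{n})$, and the uniqueness via injectivity of the Laplace transform are all consistent with the paper's intended argument.
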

The proof of this result follows essentially the same argument as that of Theorem~\ref{thm:1}. For, it is necessary first to recall that Lebesgue's measure in $\re^{n},$ say $\Lambda,$ is an invariant measure of $\xi$, and second that $\xi$ and $\widehat{\xi}=-\xi,$ are in weak duality with respect to it. Then we construct the Kusnetzov process associated to $\Lambda.$ This measure is invariant under translations. The rest of the proof is similar. The scaling property can be verified as in the proof of Theorem~\ref{thm:1} using the invariance under  translations. 

%\bibliographystyle{abbrv}
%\bibliography{bib-entrance}

\end{document}